\title[Generalized connections and Heterotic Supergravity]{Torsion-free generalized connections and Heterotic Supergravity}
\author[M. Garcia-Fernandez]{Mario Garcia-Fernandez}
  \address{\'Ecole Polytechnique F\'ed\'eral de Lausanne\\ EPFL SB MATHGEOM GEOM\\
    MA B1 437, Station 8\\ CH-1015 Lausanne, Switzerland.}
  \email{mario.garcia@epfl.ch}
\thanks{The author is currently supported by the EPFL (\'Ecole Polytechnique F\'ed\'eral de Lausanne). The initial work was supported by QGM (Centre for Quantum Geometry of Moduli Spaces), funded by the Danish National Research Foundation.} 
\theoremstyle{plain}
\newtheorem{theorem}{Theorem}[section]
\newtheorem{lemma}[theorem]{Lemma}
\newtheorem{proposition}[theorem]{Proposition}
\theoremstyle{definition}
\newtheorem{definition}[theorem]{Definition}
\newtheorem{definition-theorem}[theorem]{Definition-Theorem}
\theoremstyle{remark}
\newtheorem{remark}[theorem]{Remark}
\newcommand{\secref}[1]{\S\ref{#1}}
\numberwithin{equation}{section} \setcounter{tocdepth}{1}
\newcommand{\tr}{\operatorname{tr}}
\newcommand{\Id}{\operatorname{Id}}
\newcommand{\End}{\operatorname{End}}
\newcommand{\Ker}{\operatorname{Ker}}
\newcommand{\ad}{\operatorname{ad}}
\newcommand{\Ad}{\operatorname{Ad}}
\newcommand{\Aut}{\operatorname{Aut}}
\newcommand{\RR}{{\mathbb R}}
\newcommand{\ZZ}{{\mathbb Z}}
\newcommand{\rk}{\operatorname{rk}}
\renewcommand{\(}{\left(}
\renewcommand{\)}{\right)}
\newcommand{\Vol}{\operatorname{Vol}}
\newcommand{\surj}{\to\kern-1.8ex\to}
\begin{document}

\maketitle

\begin{abstract}%
This work revisits the notions of connection and curvature in generalized geometry, with emphasis  on torsion-free generalized connections on a transitive Courant algebroid. 
As an application, we provide a mathematical derivation of the equations of motion of heterotic supergravity in terms of the Ricci tensor of a generalized metric, inspired by the work of Coimbra, Strickland-Constable and Waldram.

\end{abstract}


\section{Introduction}

Generalized connections were introduced by M. Gualtieri in \cite{G3}, for the study of holomorphic Poisson structures and generalized K\"ahler geometry. In this work we revisit this notion 
with a completely different emphasis: we fix a generalized metric and consider generalized connections compatible with the metric and with vanishing torsion. The study of torsion-free generalized connections has not been addressed before in the mathematics literature, probably due to the uneasy fact that, in generalized geometry, the metric does not determine a unique such connection. In addition, the precise geometric set-up for our study makes use of transitive Courant algebroids \cite{LiuWeXu,Severa,Vaisman,Bressler}, extending Hitchin's original proposal for generalized complex geometry \cite{Hit1}. 
Non-exact Courant algebroids arise naturally from the theory of reduction of Burzstyn, Cavalcanti and Gualtieri \cite{BuCaGu}. In generalized geometry, its study was proposed by Baraglia \cite{Ba} and has been recently developed by Rubio in \cite{Rubio}. 

One motivation for the study of generalized geometry is that it 
provides a natural framework to describe the geometry of string theory or M-theory in the presence of fluxes \cite{Hull3}. 
In Hitchin's seminal work \cite{Hit1}, generalized geometry is formulated in the generalized tangent bundle $TM \oplus T^*M$ of a manifold $M$ and incorporates a closed $3$-form that can be interpreted as the $H$-flux in type II string theory. For the study of $M$-theory, this structure was extended by Hull \cite{Hull3} considering higuer order forms and a modification of the generalized tangent bundle. Despite its interest in physics, a similar treatment of the heterotic string has been, so far, elusive. The first difficulty is that the $H$-flux in this theory is not closed, but rather satisfies the \emph{Bianchi identity}
\begin{equation}\label{eq:anintro}
dH = \alpha'\(\tr R\wedge R - \tr F_A \wedge F_A\),
\end{equation}
relating the $4$-form $dH$ with the curvatures $R$ and $F_A$ of a pair of connections; one in the tangent bundle, the other in a principal (gauge) bundle over $M$ (see Section \ref{sec:sugra} for details). 

As an application of our study of torsion-free generalized connections, in this work we provide a mathematical derivation of the equations of motion of heterotic supergravity \cite{BR} (see also \cite{BKO,FIVU}) -- the low energy limit of the heterotic string -- inspired by the geometric description of $11$-dimensional and type II supergravity by Coimbra, Strickland-Constable and Waldram \cite{CSW,CSW2}. Crucially, the Bianchi identity \eqref{eq:anintro} is incorporated in the geometry of a transitive Courant algebroid over $M$ via the definition of its bracket. This partially confirms Bouwknegt's recent proposal \cite{Bouw} for the geometrization of supergravity theories using generalized geometry (for previous attempts in the heterotic case see \cite{Andriot,HoKw}) 

To explain the geometric set-up, in Section \ref{sec:reduction} we construct transitive Courant algebroids via generalized reduction \cite{BuCaGu}. Given a principal $G$-bundle $P$ with vanishing first Pontryagin class, in Proposition \ref{lemma:canonical} we observe that its generalized tangent bundle $TP \oplus T^*P$ 
carries a canonical structure of (exact) Courant algebroid endowed with a (lifted) $G$-action. Using generalized reduction, we construct a transitive Courant algebroid $E$ with underlying vector bundle
$$
TM \oplus \ad P \oplus TM^*
$$
over the base manifold $M = P/G$ and calculate explicitely its bracket. 
To the knowledge of the author, the construction of transitive Courant algebroids by reduction was first pointed out by Severa in \cite{Severa}. A more fundamental approach to these objects in the context of generalized geometry has been undertaken by Rubio in \cite{Rubio} (for abelian group $G$).

In Section \ref{sec:metricsconnec} we study torsion-free generalized connections compatible with a generalized metric, on the reduced space $E$. We define a notion of generalized metric $V_+ \subset E$, that we call admissible, and prove tha existence of a canonical compatible generalized connection $D^0$ with vanishing torsion. A remarkable fact about $D^0$ is that it singles out four (standard) connections with skew torsion, compatible with a metric $g$, given by (see \eqref{eq:D-+})
\begin{align*}
\nabla^\pm &= \nabla^g \pm \frac{1}{2}g^{-1}H,\\
\nabla^{\pm1/3} &= \nabla^g \pm \frac{1}{6}g^{-1}H,
\end{align*}
where $\nabla^g$ denotes the Levi-Civita connection of $g$. This set of connections was used by Bismut \cite{Bismut} to prove a Lichnerowicz type formula and a local index theorem for connections with skew torsion (see also \cite{Ferreira,KiYi}). 

The assignment $V_+ \to D^0$ is naturally preserved by the symmetries of $E$ and, based on this, we believe that the connection $D^0$ is \emph{the} analogue of the Levi-Civita connection in generalized geometry (see Remark \ref{rem:LeviCivita}). However, as pointed out in \cite{CSW}, we should emphasize that a choice of generalized metric $V_+$ does not determine uniquely a torsion-free compatible generalised connection. Far from that, one can modify the connection $D^0$ by elements in the kernel of the natural map
\begin{equation}\label{eq:naturalmap}
(V_+ \otimes \Lambda^2V_+) \oplus (V_- \otimes \Lambda^2V_-) \to \Lambda^3V_+ \oplus \Lambda^3V_-,
\end{equation}
where $V_-$ denotes the orthogonal of $V_+$ with respect to the ambient metric on $E$. To explicitely exhibit this fact -- and as a need for our applications -- we construct a family of such connections $D^\varphi$ parameterized by $1$-forms $\varphi \in \Omega^1(M)$, adding a \emph{Weyl term} to the canonical connection $D^0$.

In Section \ref{sec:Ricci} we study two natural quantities introduced by M. Gualtieri \cite{G3} that are canonically attached to any generalized connection, namely, the generalized curvature $\operatorname{GR}$ and the generalized Ricci tensor $\operatorname{GRic}$ (see also \cite{CSW}). We provide explicit formulae for these tensors for any element in our family $D^\varphi$. This lead us to our main application: for a ten dimensional spin manifold $M$ and a suitable choice of principal bundle $P$, in Section \ref{sec:sugra} we prove that the combination of the equations of motion and the Bianchi identity in heterotic supergravity coincide with the Ricci flat condition (see Theorem \ref{th:het})
$$
\operatorname{GRic} = 0,
$$
for the generalized connection $D^\varphi$ on the transitive Courant algebroid $E$. Here, the choice of one form
$$
\varphi = \frac{-6}{\rk V_+ - 1} d\phi
$$
corresponds to the choice of a dilaton field $\phi \in C^\infty(M)$ in the physical theory. For this, we note that the other (bosonic) fields of the theory, provided by a metric $g$, a three form $H\in \Omega^3(M)$ and a gauge connection $A$, are naturally parameterized by the notion of admissible metric $V_+$. 

To clarify ideas an exhibit the main differences of our approach with that of \cite{CSW}, we first discuss how our framework applies to Type II supergravity in Section \ref{subsec:typeII}. A novelty with respect to \cite{CSW,CSW2} is the treatment of the dilaton field. In this work, we interpret the freedom provided by the dilaton field in the physical theory as the freedom of choice of torsion-free metric connection in generalized geometry. This opens the possibility of considering more general fields, in the kernel of the natural map \eqref{eq:naturalmap} playing the role of the dilaton in supergravity and, in particular, in compactifications of the heterotic string. We leave this perspective to the physicists. 

An interesting fact of our main result, as well as in the generalized geometric treatment of type II \cite{CSW} and $11$-dimensional supergravity \cite{CSW2}, is that the $3$ natural quantities which arise from the generalized Ricci tensor agree exactely with the leading order term in $\alpha'$-expansion of the $\beta$ functions in the 
sigma model approach \cite{CFMP}. This
stablishes an identification between the renormalization group flow of these physical theories and the \emph{generalized Ricci flow} (see \cite{O,StTi,StTi2} for work done in this direction). 

Further motivation for this work comes from the complex geometry of heterotic compactifications, provided by the Strominger System \cite{Strom}. From a mathematical perspective, this system provides a generalization of the K\"ahler Ricci-flat equation for the case of non-K\"ahler Calabi-Yau manifolds and its study has been proposed by S.-T. Yau in order to understand moduli spaces of complex $3$-folds with trivial canonical bundle. Although there has been recent progress in the study of the existence of solutions (see \cite{AGF1,AGF2,LiYau} and references therein), this problem is widely open, the main difficulties being its non-K\"ahler nature and the lack of understanding of its geometry. Throughout this paper we hope to show that generalized geometry provides a promising approach to study the geometry arising from the heterotic string. The application of this theory to the Strominger System will require the understanding of the supersymmetry variations in terms of spinors in transitive Courant algebroids and the study of holomorphic structures on these objects. We hope to address these and other related questions in future work.



\vspace{12pt}

\noindent
{\bf Acknowledgements.}
I am grateful to Nigel Hitchin for stimulating discussions and insight during my visit to the Mathematical Institute. Thanks also to Roberto Rubio for many `generalized hours' and his encouragement with this project. I wish to thank L. Alvarez-Consul, Bjorn Andreas, Marco Gualtieri and James Sparks for helpful discussion, and L. Alvarez-Consul and O. Garcia-Prada for their inspiring teaching on Kaluza-Klein theory. Thanks to the Centre for Quantum Geometry of Moduli Spaces for the support and confidence to undertake this project and to the Mathematical Institute in Oxford for the hospitality.

\section{Transitive Courant algebroids and reduction}\label{sec:reduction}

In this section we study lifted actions \cite{BuCaGu} on an exact Courant algebroid over a principal bundle. Assuming the existence of an equivariant isotropic splitting, we will show that any connection determines an explicit presentation of the curvature $3$-form in terms of the Chern Simons $3$-form, up to $B$-field transformation and basic terms. Conversely, starting with a principal $G$-bundle $P$ with vanishing first Pontryagin class, we observe that its total space carries a canonical exact Courant algebroid endowed with a lifted action and admitting an equivariant isotropic splitting. As observed by Severa \cite{Severa}, when the induced quadratic form $c$ on the Lie algebra of the structure group is non-degenerate, the action is non-isotropic and lead us to a (non-exact) transitive Courant algebroid over the quotient, as considered in \cite{ChStXu}.

\subsection{Lifted actions on a principal bundle}

Let $G$ be a real Lie group with Lie algebra $\mathfrak{g}$. Let $P$ be a smooth principal $G$-bundle over a smooth $n$-dimensional manifold $M$, with action on the right. Let $\hat E$ be an exact Courant algebroid over $P$, given by a short exact sequence
\begin{equation}
\label{eq:shortexactCourant}
\begin{gathered}
  \xymatrix{ & 0 \ar[r] & TP \ar[r]^-{\pi^*} & \hat E \ar[r]^-{\pi} & TP \ar[r] & 0.}
\end{gathered}
\end{equation}
Let $\psi\colon \mathfrak{g} \to \Gamma(TP)$ be the Lie algebra homomorphism given by the infinitesimal $\mathfrak{g}$-action on $P$.

\begin{definition}[\cite{BuCaGu}]
A lift of the $G$-action to $\hat E$ is an algebra morphism $\rho\colon \mathfrak{g} \to \Gamma(\hat E)$ making commutative the diagram
\begin{equation}
\label{eq:extendeddiagram}
\begin{gathered}
  \xymatrix{\mathfrak{g} \ar[r]^-{\rho} \ar[dr]_-{\psi} & \Gamma(\hat E) \ar[d]^{\pi} \\  & \Gamma(TP)}
\end{gathered}
\end{equation}
and such that the infinitesimal $\mathfrak{g}$-action on $\Gamma(\hat E)$ induced by the Courant bracket integrates to a (right) $G$-action on $\hat E$ lifting the action on $P$.
\end{definition}

In the sequel, we fix a lifted $G$-action $\rho\colon \mathfrak{g} \to \Gamma(\hat E)$ on $\hat E$ and assume that it is $G$-equivariant, that is
$$
\rho(\Ad(g)z) = g\rho(z)
$$
for all $g \in G$ and $z \in \mathfrak{g}$. Furthermore, we assume that the action on $\hat E$ admits at least one equivariant isotropic splitting $\lambda\colon TP \to \hat E$.

Associated to the lifted action $\rho$ there is an invariant quadratic form on $\mathfrak{g}$, defined by
\begin{equation}\label{eq:quadratic}
c(z) = - \langle \rho(z),\rho(z)\rangle.
\end{equation}
Given an equivariant isotropic splitting $\lambda\colon TP \to \hat E$ of \eqref{eq:shortexactCourant}, we obtain an isomorphism
$$
\lambda + \frac{1}{2}\pi^*\colon TP \oplus T^*P \to \hat E
$$
inducing a $G$-invariant, closed, curvature $3$-form $\hat H \in \Omega^3(P)^G$, defined by
$$
\hat H(X,Y,Z) = \langle[\lambda X, \lambda Y],\lambda Z\rangle
$$
and a (Dorfman) bracket given by
$$
[X + \xi,Y+\eta] = L_X(Y+\eta) - i_Y d\xi + i_Yi_X \hat H.
$$
The equivariance of the splitting and of the transported lifted action
\begin{equation}\label{eq:extendedaction}
\rho(z) = Y_z + \xi_z, \qquad z \in \mathfrak{g}
\end{equation}
can be written respectively as
\begin{equation}\label{eq:extactionpropert}
d\xi_z = i_{Y_z} \hat H \qquad \textrm{and} \qquad \xi_{\Ad(g)z} = R_g^*(\xi_z).
\end{equation}

For any choice of connection $A\colon TP \to VP$ on $P$, the following lemma provides a preferred presentation of the lifted action and the curvature $\hat H$ up to a $B$-field transformation. Similar ideas have been considered previously in \cite[Ex. 6.2]{CaGu}, but the following result does not seem to be in the literature. Consider the Chern-Simons $3$-form
$$
CS(A) = - \frac{1}{6} c(A\wedge [A,A]) + c(F\wedge A) \in \Omega^3(P)^G,
$$
where $F = F_A \in \Omega^2(P,VP)^G$ denotes the curvature of the connection
$$
F(X,Y) = -A[A^\perp X,A^\perp Y].
$$
Recall that $CS(A)$ is defined such that its differential equals the representative of the first Pontriagin class of the bundle
$$
d CS(A) = c(F\wedge F),
$$
and, more explicitely, it can be written as
\begin{equation}\label{eq:CSA}
\begin{split}
CS(A)(X,Y,Z) & = - c(AX,[AY,AZ])\\
& + c(F(X,Y),AZ) - c(F(X,Z),AY) + c(F(Y,Z),AX)
\end{split}
\end{equation}
for invariant vector fields $X,Y,Z \in \Gamma(TP)^G$.

\begin{lemma}\label{lemma:preferred}
For any connection $A\colon TP \to VP$ there exists an equivariant isotropic splitting $\lambda\colon TP \to \hat E$ and a $3$-form $H \in \Omega^3(M)$ such that
\begin{equation}\label{eq:extendedactpreferred}
\rho(z) = Y_z - c(z,A\cdot),
\end{equation}
\begin{equation}\label{eq:Hpreferred}
\hat H = -CS(A) + p^*H.
\end{equation}
This splitting is unique up to basic $B$-field transformation, that is, transformation by pull-back of a $2$-form $b \in \Omega^2(M)$
\end{lemma}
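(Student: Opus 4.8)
The strategy is to produce the required splitting from an arbitrary equivariant isotropic splitting by a $G$-invariant $B$-field transformation chosen to absorb the connection-dependent part of the lifted action. Recall that if $\lambda_0$ is an equivariant isotropic splitting, with induced data $\hat H_0 \in \Omega^3(P)^G$ and transported action $\rho(z) = Y_z + \xi^0_z$, then for any $b \in \Omega^2(P)^G$ the $B$-field transform of $\lambda_0$ by $b$ is again an equivariant isotropic splitting, with curvature $\hat H_0 + db$ and transported action $\rho(z) = Y_z + (\xi^0_z - i_{Y_z}b)$; conversely any two equivariant isotropic splittings differ by a unique such $b$, since two isotropic splittings differ by a $2$-form on $P$, which equivariance of both forces to be $G$-invariant. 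The statement thus reduces to finding $b \in \Omega^2(P)^G$ with
\[
i_{Y_z}b = \xi^0_z + c(z,A\cdot), \qquad z \in \mathfrak{g},
\]
where $c(z,A\cdot)\in\Omega^1(P)$ denotes the $1$-form $X \mapsto c(z,A(X))$.

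Write $\alpha_z$ for the right-hand side above. The crucial point is that these prescribed vertical contractions are mutually compatible: since $A(Y_w)=w$, the required skew-symmetry $i_{Y_w}\alpha_z = -i_{Y_z}\alpha_w$ amounts to $\xi^0_z(Y_w)+\xi^0_w(Y_z) = -2c(z,w)$, which is exactly the polarization of the definition \eqref{eq:quadratic} of $c$ together with $\langle\rho(z),\rho(w)\rangle = \tfrac12\big(\xi^0_z(Y_w)+\xi^0_w(Y_z)\big)$. Granted this, the connection $A$ lets one exhibit a global $G$-invariant $2$-form realizing these contractions; in a local frame $\{e_a\}$ of $\mathfrak{g}$ one may take $b = A^a\wedge\alpha_{e_a} - \tfrac12\,\alpha_{e_a}(Y_{e_c})\,A^a\wedge A^c$, which is a well-defined $2$-form because the coefficients $\alpha_{e_a}(Y_{e_c})$ are skew in $a,c$ by the preceding identity, and which is $G$-invariant by equivariance of $A$ and of $\xi^0$ and invariance of $c$. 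Transforming $\lambda_0$ by this $b$ gives an equivariant isotropic splitting $\lambda$ whose transported action is $\rho(z) = Y_z + (\xi^0_z - \alpha_z) = Y_z - c(z,A\cdot)$, which is \eqref{eq:extendedactpreferred}.

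It remains to identify the new curvature $\hat H \defeq \hat H_0 + db$. Applying \eqref{eq:extactionpropert} to $\lambda$ and using the structure equation $dA = F - \tfrac12[A,A]$ (with $z$ constant) gives $i_{Y_z}\hat H = -d\big(c(z,A\cdot)\big) = -c(z,F)+\tfrac12 c(z,[A,A])$. On the other hand, contracting \eqref{eq:CSA} with a fundamental vector field $Y_z$ --- so that $F(Y_z,\cdot)=0$ and $A(Y_z)=z$ --- yields the $2$-form $i_{Y_z}CS(A) = c(z,F) - \tfrac12 c(z,[A,A])$. Hence $i_{Y_z}\big(\hat H + CS(A)\big)=0$ for all $z$, so $\hat H + CS(A)$ is horizontal; being also $G$-invariant (both $\hat H$ and $CS(A)$ lie in $\Omega^3(P)^G$), it is basic, hence equals $p^*H$ for a unique $H \in \Omega^3(M)$, which is \eqref{eq:Hpreferred}. (Applying $d$ to \eqref{eq:Hpreferred} then gives $p^*dH = c(F\wedge F)$, which is the mechanism by which a Bianchi-type identity for $H$ enters the picture.) For the uniqueness clause, a second equivariant isotropic splitting satisfying \eqref{eq:extendedactpreferred} is the transform of $\lambda$ by some $b'\in\Omega^2(P)^G$ with $i_{Y_z}b'=0$ for all $z$; such a $b'$ is horizontal and $G$-invariant, hence basic, $b' = p^*b_0$ with $b_0\in\Omega^2(M)$, and $H$ is correspondingly replaced by $H+db_0$.

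The main obstacle is the step in the second paragraph: one must recognize that the prescribed vertical contractions of $b$ are consistent --- the single place where the exact normalization of the quadratic form $c$ is used --- and then produce $b$ globally and $G$-invariantly, which is precisely what the connection $A$ makes possible. Everything after that, in particular the emergence of the Chern-Simons form in $\hat H$, is a direct computation with \eqref{eq:CSA} and the structure equation for $F$.
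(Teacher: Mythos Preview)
Your proof is correct and follows the same strategy as the paper: start from an arbitrary equivariant isotropic splitting, apply a $G$-invariant $B$-field transform to force \eqref{eq:extendedactpreferred}, then identify the resulting curvature. In fact your $2$-form $b = A^a\wedge\alpha_{e_a} - \tfrac12\,\alpha_{e_a}(Y_{e_c})\,A^a\wedge A^c$ is exactly the paper's basis-free expression $b = b' - \hat b$ rewritten in a frame (the extra $c$-terms in $\alpha$ drop out by symmetry of $c$ against the wedge); the only genuine difference is cosmetic --- you compute $i_{Y_z}\hat H$ via the structure equation $dA = F - \tfrac12[A,A]$, whereas the paper evaluates $d\xi'_z$ on vertical/horizontal pairs case by case, and you make explicit the skew-symmetry check $i_{Y_w}\alpha_z = -i_{Y_z}\alpha_w$ that the paper leaves implicit.
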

\begin{proof}
Define an invariant two form $b \in \Omega^2(P)^G$ by $b = b' - \hat b$, where
\begin{align*}
b'(Y_1,Y_2) & = i_{Y_2}\xi_{A Y_1} - i_{Y_1}\xi_{A Y_2}\\
\hat b(Y_1,Y_2) & = \frac{1}{2}(i_{A Y_2}\xi_{A Y_1} - i_{AY_1}\xi_{AY_2}),
\end{align*}
and note that
$$
\xi_z(Y) = b(Y_z,Y) - c(z,AY).
$$
Then, in the equivariant splitting $\lambda' = e^{b}\lambda$ the lifted action is given by
$$
\rho(z) = Y_z + \xi'_z = Y_z + \xi_z - i_{Y_z}b = Y_z - c(z,A\cdot)
$$
which proves \eqref{eq:extendedactpreferred}. To prove \eqref{eq:Hpreferred}, it is enough to check that
$$
i_{Y_z}(\hat H + db) = -i_{Y_z}CS(A),
$$
for any $z \in \mathfrak{g}$. Note that $i_{Y_z}(\hat H + db) = d\xi'_z$ by \eqref{eq:extactionpropert} and hence, using that
$$
d\xi'_z(X,Y) = - X(c(z,AY)) + Y(c(z,AX)) + c(z,A[X,Y])
$$
for any invariant vector fields $X,Y$ on $P$ we obtain
\begin{align*}
d\xi'_z(Y_{z'},Y_{z''}) & = c(z,[z',z''])\\
d\xi'_z(Y_{z'},A^\perp Y)& = c(z,[Y_{z'},A^\perp Y]) = 0\\
d\xi'_z(A^\perp X,A^\perp Y)& = c(z,A[A^\perp X,A^\perp Y]) = - c(F(X,Y),z),\\
\end{align*}
which proves \eqref{eq:Hpreferred}. The statement about uniqueness is obvious.
\end{proof}

Our discussion lead us to the following converse of the previous result, that shall be compared with the main result in \cite{ChStXu}.
\begin{proposition}\label{lemma:canonical}
Given an invariant quadratic form $c$ on $\mathfrak{g}$ and a principal $G$-bundle $P$ with vanishing first Pontryagin class, there exists a canonical exact Courant algebroid over $P$, endowed with a lifted action and admitting an equivariant isotropic splitting, uniquely determined up to isomorphism.
\end{proposition}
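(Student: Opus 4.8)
The plan is to reverse-engineer the construction from Lemma~\ref{lemma:preferred}: that lemma tells us that, given a connection, any exact Courant algebroid with an equivariant lifted action is, up to $B$-field, of the form prescribed by \eqref{eq:extendedactpreferred}--\eqref{eq:Hpreferred}. So I would simply take those formulae as the \emph{definition} of the desired structure and verify the axioms. Concretely, fix a connection $A\colon TP \to VP$ on $P$ (connections always exist). Since $p_1(P) = 0$, the closed form $c(F \wedge F) = d\,CS(A)$ is exact on $M$, say $c(F\wedge F) = d\theta$ for some $\theta \in \Omega^3(M)$; set $H = -p^*\theta$ (or absorb this into the freedom below) so that $\hat H \defeq -CS(A) + p^*H \in \Omega^3(P)^G$ is \emph{closed}. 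On the vector bundle $TP \oplus T^*P$ define the Courant algebroid structure by the Dorfman bracket twisted by $\hat H$, the standard pairing, and the anchor $\pi = \pr_{TP}$; closedness of $\hat H$ is exactly what makes this an exact Courant algebroid.

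Next I would define the lifted action by $\rho(z) = Y_z - c(z, A\cdot)$ as in \eqref{eq:extendedactpreferred}, where $Y_z = \psi(z)$ is the fundamental vector field. The steps to check are: (i) $\pi \circ \rho = \psi$, which is immediate; (ii) $\rho$ is a bracket homomorphism $\mathfrak{g} \to \Gamma(\hat E)$ — this is the computation already carried out inside the proof of Lemma~\ref{lemma:preferred}, namely that $[\rho(z), \rho(z')] = \rho([z,z'])$, which reduces to the identities $d\xi'_z(Y_{z'}, Y_{z''}) = c(z,[z',z''])$, $d\xi'_z(Y_{z'}, A^\perp Y) = 0$, $d\xi'_z(A^\perp X, A^\perp Y) = -c(F(X,Y),z)$ together with the defining relation $i_{Y_z}\hat H = d\xi_z$ for $\xi_z = -c(z,A\cdot)$ (which holds precisely because $\hat H = -CS(A) + p^*H$ and $CS(A)$ has differential $c(F\wedge F)$); (iii) $G$-equivariance $\rho(\Ad(g)z) = g\cdot\rho(z)$, which follows from equivariance of $A$ and invariance of $c$; and (iv) the induced infinitesimal action integrates to a global (right) $G$-action on $\hat E$ lifting the one on $P$ — here one uses that $G$ already acts on $P$ and the cocycle data $(\hat H, \xi_z)$ is $G$-invariant, so the action is given explicitly by $R_g$ on $TP$ plus the pullback $R_g^*$ on $T^*P$, which preserves the twisted bracket because $R_g^* \hat H = \hat H$. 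Finally, the isotropic splitting $\lambda\colon TP \to \hat E = TP\oplus T^*P$ is the obvious inclusion $X \mapsto X + 0$; it is isotropic and $G$-equivariant by construction.

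For the uniqueness clause: suppose $\hat E_1, \hat E_2$ are two exact Courant algebroids over $P$ with equivariant lifted actions $\rho_i$ and equivariant isotropic splittings, both inducing the given form $c$ via \eqref{eq:quadratic}. Apply Lemma~\ref{lemma:preferred} to each with respect to the \emph{same} connection $A$: this produces equivariant splittings in which $\rho_i(z) = Y_z - c(z,A\cdot)$ and $\hat H_i = -CS(A) + p^*H_i$ for some $H_i \in \Omega^3(M)$. Both $\hat H_i$ are closed, hence $d(p^*H_1) = d(p^*H_2) = c(F\wedge F)$ wait—$dCS(A) = c(F\wedge F)$ so $d(p^*H_i) = d\hat H_i + dCS(A) = c(F \wedge F)$, forcing $d(H_1 - H_2) = 0$; then $H_1 - H_2$ is closed and the two algebroids differ by the closed basic $3$-form $p^*(H_1 - H_2)$, which is realized by a basic $B$-field transformation (possibly after also using that $H_i$ are themselves only well-defined modulo the ambiguity in $\theta$, i.e.\ modulo exact forms, giving genuine $B$-field freedom $b \in \Omega^2(M)$). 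This exhibits an isomorphism $\hat E_1 \cong \hat E_2$ intertwining the lifted actions, up to basic $B$-field, which is the asserted uniqueness.

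The main obstacle I anticipate is item (iv) — promoting the infinitesimal lifted $\mathfrak{g}$-action to a genuine $G$-action on $\hat E$ integrating it — since this is where a purely infinitesimal construction can fail to globalize; but because here $G$ already acts on the \emph{base} object $P$ and all the twisting data $(\hat H, \{\xi_z\})$ is manifestly $G$-invariant (indeed pulled back or built $\Ad$-equivariantly from data on $M$), the integration is essentially forced: $g$ acts by $(X + \xi) \mapsto (R_{g*}X) + (R_g^{-1})^*\xi$, and one checks this is a Courant algebroid automorphism using $R_g^*\hat H = \hat H$ and differentiates it to recover $\rho$. A secondary subtlety is bookkeeping the exact-form ambiguity in choosing $\theta$ with $d\theta = c(F\wedge F)$, but this ambiguity is precisely the basic $B$-field freedom already present in the statement, so it causes no real trouble.
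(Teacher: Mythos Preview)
Your proposal is correct and follows exactly the paper's approach: pick a connection $A$, use the vanishing of $p_1(P)$ to make $\hat H = -CS(A) + p^*H$ closed, twist the standard bracket on $TP\oplus T^*P$ by $\hat H$, define the lifted action by \eqref{eq:extendedactpreferred}, and then appeal to Lemma~\ref{lemma:preferred} for independence of the choice of connection; the paper's proof is a one-sentence sketch of precisely this. The hesitation you flag in the uniqueness step --- that $H_1-H_2$ is merely closed, so $p^*(H_1-H_2)$ is not literally a $B$-field shift unless it is exact --- is real, but it is an imprecision in the paper's formulation of ``uniquely determined up to isomorphism'' rather than a defect in your argument: the paper's own proof does not address this either, and the honest statement is that the construction is independent of the connection while the choice of primitive $H$ with $dH=c(F\wedge F)$ remains as genuine additional data.
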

For the proof, one simply makes a choice of connection and defines a bracket on $TP \oplus T^*P$ using the closed three form \eqref{eq:Hpreferred}. The lifted action is then given by \eqref{eq:extendedactpreferred} and the independence of the choices up to isomorphism follows from the proof of the previous lemma. Note that we have the following structure equation for the exterior differential of the $3$-form $H \in \Omega^3(M)$ in \eqref{eq:Hpreferred}
\begin{equation}\label{eq:dH}
dH = c(F\wedge F).
\end{equation}

\subsection{Non-isotropic lifted actions}

We consider now the case when $c$ is non-degenerate, in the context of Proposition \ref{lemma:canonical}. Using the lifted action, we can construct a reduced Courant algebroid $E$ over $M= P/G$. For this, let
$$
K := \rho(\mathfrak{g}) \subset \hat E
$$
and $K^\perp$ be its orthogonal on $\hat E$. Then, by \cite[Lemma 3.2]{BuCaGu}, $K$ and $K\cap K^\perp$ have constant rank and hence define $G$-equivariant vector bundles over $P$. Therefore, we can define $E$ as the quotient vector bundle (see equation $(15)$ in \cite{BuCaGu})
\begin{equation}
E = E_{red} = \frac{K^\perp}{K\cap K^\perp}\Big{/} G
\end{equation}
over $M$. As a direct aplication of \cite[Theorem 3.3]{BuCaGu}, we obtain that $E$ inherits a structure of Courant algebroid over $M$ with surjective anchor, that is, a transitive Courant algebroid.

As the image of the lifted action $K$ is not isotropic, the reduced space $E$ is no longer an exact Courant algebroid, as we will see explicitely in the next proposition. In the sequel, we denote by $T$ the tangent bundle of $M$ and by $\nabla^A$ the connection on $\ad P$ induced by a connection $A$, explicitely given by (see \cite{AB})
$$
\nabla^A_X(r) = [A^\perp X,r]
$$
for $X \in \Omega^0(T)$ and $r \in \Omega^0(\ad P)$.

\begin{proposition}\label{prop:reducedcourant}
Suppose that $c$ is non-degenerate. 
Then, any connection $A$ determines an isomorphism
$$
E \cong TP/G \oplus T^*
$$
and a $3$-form $H$ on $M$, uniquely up to basic exact $3$-forms, such that the symmetric pairing is given by
$$
\langle X + \xi,Y + \eta\rangle = \frac{1}{2}(i_{pX}\eta + i_{pY}\xi) + c(AX,AY)
$$
and the Dorfman Bracket is given by
\begin{equation}\label{eq:bracket}
\begin{split}
[X+\xi,Y+\eta] & = [X,Y] + L_{pX}\eta - i_{pY}d\xi + i_{pY}i_{pX}H\\
& + 2c(\nabla^A(A X),A Y) + 2c(F(X,\cdot),A Y) - 2c(F(Y,\cdot),A X).
\end{split}
\end{equation}
\end{proposition}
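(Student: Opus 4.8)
The plan is to apply the reduction theorem of Bursztyn--Cavalcanti--Gualtieri directly to the canonical exact Courant algebroid on $P$ produced by Proposition \ref{lemma:canonical}, and then to compute everything explicitly in the preferred splitting $\lambda$ of Lemma \ref{lemma:preferred}. First I would use a connection $A$ to identify $\hat E \cong TP \oplus T^*P$ with bracket twisted by $\hat H = -CS(A) + p^*H$ and with the lifted action written in the preferred form $\rho(z) = Y_z - c(z, A\cdot)$. Since $c$ is non-degenerate and the action is $G$-equivariant, $K = \rho(\mathfrak{g})$ is a rank-$\dim\mathfrak{g}$ isotropic-free subbundle; because $\langle \rho(z), \rho(w)\rangle = -c(z,w)$ is non-degenerate, $K \cap K^\perp = 0$, so $E_{red} = (K^\perp/G)$. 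The main bookkeeping step is to exhibit a convenient model of $K^\perp \subset TP \oplus T^*P$: an element $X + \xi$ lies in $K^\perp$ iff $\frac12 \xi(Y_z) - c(z, AX) = 0$ for all $z$, i.e. $\xi(Y_z) = 2c(z,AX)$, which pins down the "vertical part" of $\xi$ in terms of $AX$. Quotienting by $G$ and by $K$ itself, the horizontal vector field $A^\perp(pX)$ combines with $AX \in \ad P$ to give the claimed splitting $E \cong TP/G \oplus T^* \cong T \oplus \ad P \oplus T^*$.

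Next I would establish the formula for the pairing. On $K^\perp/G$ the induced pairing is just the restriction of the $\hat E$-pairing; writing a reduced section as $X + \xi$ with $X$ now a section of $TP/G$ and $\xi$ the basic part of the one-form, the vertical contributions of the two one-forms against each other vanish (both sit in $K^\perp$, and $K\cap K^\perp = 0$ kills the ambiguity), while the cross terms $\frac12(\xi(Y_z)$-type pieces) reassemble as $c(AX, AY)$ using $\xi_\cdot(Y_z) = 2c(z, A\cdot)$. This yields
\begin{equation*}
\langle X + \xi, Y + \eta \rangle = \tfrac{1}{2}(i_{pX}\eta + i_{pY}\xi) + c(AX, AY),
\end{equation*}
exactly the stated non-exact pairing, the $c(AX,AY)$ term being precisely the signature of non-isotropy of $K$.

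For the bracket I would compute the reduced Dorfman bracket as the $G$-quotient of the restriction of the twisted bracket on $TP \oplus T^*P$ (this restriction is well-defined on $K^\perp$ by \cite[Theorem 3.3]{BuCaGu}, since $[K, K^\perp] \subseteq K^\perp$). Feeding in $\hat H = -CS(A) + p^*H$ and the explicit expression \eqref{eq:CSA} for the Chern--Simons form, the term $i_{pY}i_{pX}\hat H$ splits into the basic $i_{pY}i_{pX}H$ plus Chern--Simons contributions; the latter, together with the Lie-derivative terms $L_X(Y+\eta)$ acting on the $\ad P$-valued components (which produce $\nabla^A$ via $\nabla^A_X r = [A^\perp X, r]$) and the $i_Y d\xi$ terms, recombine — after using the structure equation $dH = c(F\wedge F)$ implicitly through $dCS(A) = c(F\wedge F)$ and the second Bianchi identity for $F$ — into the curvature corrections
\begin{equation*}
2c(\nabla^A(AX), AY) + 2c(F(X,\cdot), AY) - 2c(F(Y,\cdot), AX).
\end{equation*}
The uniqueness of $H$ up to basic exact $3$-forms follows from the uniqueness clause of Lemma \ref{lemma:preferred}: changing the connection $A$ or the splitting by a basic $B$-field shifts $H$ by $db$ with $b \in \Omega^2(M)$. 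I expect the genuine obstacle to be the bracket computation: keeping track of which Chern--Simons terms are killed by the vertical projection onto $K^\perp$, which survive as $\ad P$-curvature terms, and verifying that the antisymmetrization matches the Dorfman (rather than Courant) convention requires careful index manipulation with the three-form \eqref{eq:CSA} and repeated use of $F(X,Y) = -A[A^\perp X, A^\perp Y]$; everything else is essentially linear algebra on the short exact sequences.
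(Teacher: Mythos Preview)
Your approach is essentially identical to the paper's: invoke Lemma \ref{lemma:preferred} to put the lifted action in the preferred form $\rho(z)=Y_z-c(z,A\cdot)$, describe $K^\perp$ explicitly, identify $G$-invariant sections of $K^\perp$ with $\Gamma(TP/G\oplus T^*)$ via $X+\xi\mapsto X+c(AX,A\cdot)+p^*\xi$, and then compute the pairing and Dorfman bracket directly on this model. Two small corrections: the orthogonality condition is $\tfrac12\bigl(\xi(Y_z)-c(z,AX)\bigr)=0$, i.e.\ $\xi(Y_z)=c(z,AX)$ rather than $2c(z,AX)$; and the bracket computation is a purely pointwise expansion of the $\hat H$-twisted Dorfman bracket on $K^\perp$ using \eqref{eq:CSA}, so neither $dH=c(F\wedge F)$ nor the second Bianchi identity is actually needed there.
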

\begin{proof}
By Lemma \ref{lemma:preferred}, given $A$ there exists an equivariant isotropic splitting such that
$$
\rho(z) = Y_z - c(z,A\cdot),
$$
uniquely defined up to $B$-transformation by $p^*b$, for $b \in \Omega^2(M)$. Then, a direct calculation shows that
$$
K^\perp = \{X + c(A X,A\cdot) + \xi \; : \; X \in \Gamma(TP), \xi \in \Omega^1(P)_{bas}\},
$$
where $\Omega^1(P)_{bas}$ denotes the space of basic $1$-forms in $P$. Hence,
$$
K \cap K^\perp = \{0\}
$$
and the sections of $E$ are given by invariant sections of $K^\perp$, which are canonically identified with
$$
\Gamma(TP/G \oplus T^*M) \cong \Gamma(K^\perp)^G \colon X + \xi \to X + c(A X,A\cdot) + p^*\xi .
$$
The statement is now straightforward by direct calculation of the Courant bracket
\begin{align*}
[X+\xi,Y+\eta] & = [X,Y] + L_{pX}\eta - i_{pY}d\xi\\
& + L_{X}(c(A Y,A\cdot)) - i_Yd(c(A X,A\cdot)) + i_Yi_X(p^*H- CS(A))
\end{align*}
and the symmetric pairing on $\Gamma(K^\perp)^G$.
\end{proof}

To compare the previous result with the analysis of regular Courant algebroids in \cite{ChStXu}, note that any connection $A$ provides further an isomorphism
\begin{equation}\label{eq:splittingA}
E \cong T \oplus \ad P \oplus T^*,
\end{equation}
which endows $T \oplus \ad P \oplus T^*$ with a standard structure of transitive Courant algebroid (see \cite[\S 2]{ChStXu}).

\section{Admissible metrics and torsion free connections}\label{sec:metricsconnec}

In this section we study generalized geometry on the reduced Courant algebroid
\begin{equation}\label{eq:exactcourant}
0 \to T^* \to E \to T \to 0,
\end{equation}
provided by Propositions \ref{lemma:canonical} and \ref{prop:reducedcourant}, focusing on generalized metrics and torsion free connections, following \cite{CSW,G3}. We define a notion of generalized metric on $E$ that we call admissible and construct a compatible torsion free generalized connection $D^0$ in a canonical way. As pointed out in \cite{CSW}, torsion free metric connections in generalized geometry are not unique, and this enables us to consider deformations of $D^0$ adding a \emph{Weyl term}.

Although $E$ and $T \oplus \ad P \oplus T^*$ are isomorphic, there exists no canonical isomorphism, so we adopt a splitting independent approach. Generalized connections and its torsion were defined by M. Guatieri in \cite{G3}. Here we follow closely his approach. We rely on previous results on generalized geometry for transitive Courant algebroids by R. Rubio in \cite[Section 2.1]{Rubio}.

\subsection{Preliminaries: Linear geometry}

In this section we recall the linear geometry of a fibre $W = E_x$ of the reduced Courant algebroid \cite{ChStXu,Rubio}, to introduce the neccessary notation and conventions.

Let $W$ and $V^*$ be real vector spaces, defining a complex
\begin{equation}
\label{eq:linearsurjection}
\begin{gathered}
  \xymatrix{V^* \ar[r] &  W \ar[r]^-{\pi} & V \ar[r] & 0,}
\end{gathered}
\end{equation}
for a surjective map $\pi$. We assume that $W$ is endowed with a metric $\langle\cdot,\cdot\rangle$ of arbitrary signature such that the image of $V^* \to W^*$ is isotropic. Furthermore, we assume that the first arrow is given by the composition of $\frac{1}{2}\pi^*\colon V^* \to W^*$ with the isomorphism $W \cong W^*$ given by the metric. Then, we have canonical flags
\begin{equation}\label{eq:flag1}
V^* \subset \Ker \pi \subset W,
\end{equation}
\begin{equation}\label{eq:flag2}
\mathfrak{g} = \Ker \pi/V^* \subset Q = W/V^*.
\end{equation}
Moreover, $V^* = \Ker \pi^\perp$ and hence $\mathfrak{g}$ inherits a metric $c$ from $W$, that we assume is non degenerate.

Given an isotropic splitting $\lambda\colon V \to W$ of \eqref{eq:linearsurjection} we can construct an isomorphism
\begin{equation}\label{eq:split}
W \cong V \oplus \mathfrak{g} \oplus V^*
\end{equation}
such that the transported metric is given by
$$
\langle X + r + \xi, X + r + \xi\rangle = \xi(X) + c(r,r).
$$
More explicitely, if we denote by $\lambda^\perp$ the orthogonal to the image of $\lambda$, the induced isomorphism is
\begin{equation}\label{eq:Psilambda}
\Psi_\lambda\colon V \oplus \mathfrak{g} \oplus V^* \to W \colon X + r + \xi \to \lambda(X) + \pi_{Q|\lambda^\perp}^{-1}(r) + \xi,
\end{equation}
where $\pi_Q\colon W \to Q$ is the natural projection and $\pi_{Q|\lambda^\perp}$ is the isomorphism given by its restriction to $\lambda^\perp$.

The space of isotropic splittings of \eqref{eq:linearsurjection} is an affine space modelled on the vector space $\Omega^2 \oplus \Omega^1(\mathfrak{g})$, where, given a non-negative integer $k$, we use the notation
$$
\Omega^k = \Lambda^k V^*, \qquad \Omega^k(\mathfrak{g}) = \Omega^k \otimes \mathfrak{g}.
$$
To see this, note simply that any other isotropic splitting can be constructed as
$$
\lambda_{a,b} = \lambda + \pi_{Q|\lambda^\perp}^{-1}a + b - c(a\otimes a),
$$
for $(b,a) \in \Omega^2 \times \Omega^1(\mathfrak{g})$. To calculate the change in the isomorphism \eqref{eq:Psilambda}, recall from \cite{Rubio} that in the splitting \eqref{eq:split} a general element of the Lie algebra $\mathfrak{so}(W)$ of orthogonal symmetries can be written as a block matrix, given by
\begin{equation}
\label{eq:Liealg} \left (\begin{array}{ccc}
f & - 2c(\alpha,\cdot) & \beta \\
a & e & \alpha\\
b & - 2c(a,\cdot) & - f^T
\end{array}\right ),
\end{equation}
where $f \in \End V$, $b \in \Omega^2$,  $a \in \Omega^1(\mathfrak{g})$, $\beta \in \Lambda^2V$, $e \in \mathfrak{so}(\mathfrak{g})$ and $\alpha \in V \otimes \mathfrak{g}$. Then, $(b,a)$ exponentiates in $SO(V \oplus \mathfrak{g} \oplus V^*)$ to
\begin{equation}
\label{eq:Liealg}
e^{(b,a)} = \left (\begin{array}{ccc}
1 & 0 & 0 \\
a & 1 & 0\\
b - c(a \otimes a) & - 2c(a,\cdot) & 1
\end{array}\right ).
\end{equation}
and a straightforward calculation shows that $e^{b,a} = \Psi_\lambda^{-1}\Psi_{\lambda_{a,B}}$. Following \cite{Rubio}, such a symmetry will be called a $(b,a)$-\emph{transform}. Exponentiation endows the product $\Omega^2 \times \Omega^1(\mathfrak{g})$ with a natural group operation
$$
(b,a)\cdot(b',a') = (b + b' + c(a\wedge a'),a+a')
$$
that acts transitively on the space of isotropic splittings, where the right hand side corresponds to the element $e^{(b,a)} \cdot e^{(b',a')}$.

\subsection{Admissible generalized metrics}

Let us denote by $(t,s)$ the signature of the metric on $E$. A generalized metric of signature $(p,q)$, or simply, a metric on $E$ is a reduction of the $O(t,s)$-bundle of frames of $E$ to
$$
O(p,q)\times O(t-q,s-p) \subset O(t,s).
$$
Alternatively, it is given by a subbundle
$$
V_+ \subset E
$$
such that the restriction of the metric on $E$ to $V_+$ is a non-degenerate metric of signature $(p,q)$. We denote by $V_-$ the orthogonal complement of $V_+$ on $E$. A generalised metric determines a vector bundle isomorphism
$$
G\colon E \to E,
$$
with $\pm$ eigenspace $V_\pm$, which is symmetric $G^* = G$ and squares to the identity $G^2 = \Id$. The endomorphism $G$ determines completely the metric, as $V_+$ is recovered by
$$
V_+ = \Ker(G - \Id).
$$

\begin{definition}\label{def:admet}
A metric $V_+$ of arbitrary signature is \emph{admissible} if
$$
V_+ \cap T^* = \{0\} \qquad  \textrm{and} \qquad \rk V_+ = \rk E - \dim M.
$$
\end{definition}

The main difference with the definite case studied in \cite{G1} is that, in arbitrary signature, $V_+$ may cut $T^*$ on a non-trivial isotropic subspace. This motivates the first condition in the previous definition.

The second condition will imply that admissible metrics have a similar structure as definite generalized metrics on an exact Courant algebroid, that is, they are given by a pair consisting of an isotropic splitting of $E$ and a metric on $M$. To see this, note that for any admissible metric $V_+$ the restriction of $\pi_Q$ to $V_+$ induces an isomorphism with $Q = E/T^*$ by transversality. This isomorphism provides an isometry
$$
V_+ \cap \Ker \pi \cong \ad P
$$
and hence the orthogonal complement $(V_+ \cap \Ker \pi)^\perp \subset V_+$ inherits a non-degenerate metric $g$, that we can identify with a metric on $T$ via the isomorphism $\pi_\perp \colon (V_+ \cap \Ker \pi)^\perp \to T$ induced by $\pi$. The isotropic splitting determined by $V_+$ is then given by
$$
\lambda(X) = \pi_\perp^{-1}(X) - g(X)
$$
and, in summary, we have the following:

\begin{proposition}\label{prop:admissmetric}
An admissible metric $V_+$ is equivalent to a pair $(g,\lambda)$, where $g$ is a metric on $T$ and $\lambda \colon T \to E$ is an isotropic splitting such that
\begin{equation}\label{eq:V+exact}
V_+ = \{\lambda(X) + g(X) + \pi_{Q|V_+}^{-1}(r) \colon X \in T, r \in \ad P\}.
\end{equation}
\end{proposition}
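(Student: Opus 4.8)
The plan is to prove the stated equivalence by producing the two assignments $V_+ \mapsto (g,\lambda)$ and $(g,\lambda) \mapsto V_+$ explicitly and checking that they are mutually inverse. The construction of $(g,\lambda)$ out of $V_+$ is already sketched in the paragraph preceding the statement, so the real content is to verify the assertions made there and then to run the construction backwards. Everything takes place fibrewise, in the linear model $V^* \subset \Ker\pi \subset W$ of \eqref{eq:flag1}--\eqref{eq:flag2}, and globalises since each step is canonical and rank-constant.

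First I would start from an admissible metric $V_+$. Since $V_+ \cap T^* = \{0\}$, the projection $\pi_Q\colon E \to Q = E/T^*$ is injective on $V_+$, and as $\rk V_+ = \rk E - \dim M = \rk Q$ it restricts to an isomorphism $\pi_{Q|V_+}\colon V_+ \to Q$. Because $\pi_Q^{-1}(\ad P) = \Ker\pi$, this isomorphism identifies $V_+ \cap \Ker\pi$ with $\ad P \subset Q$; moreover the ambient pairing restricted to $\Ker\pi$ descends to $\Ker\pi/V^* = \mathfrak g$ (as $V^* = \Ker\pi^\perp$), where it is the form $c$, non-degenerate by hypothesis. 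Hence $V_+ \cap \Ker\pi$ is a non-degenerate subspace of $V_+$, so $V_+$ splits orthogonally as $(V_+\cap\Ker\pi)\oplus V_+^0$ with $V_+^0 := (V_+\cap\Ker\pi)^{\perp}\cap V_+$ non-degenerate of rank $\rk Q - \rk\mathfrak g = \dim M$. Since $V_+^0 \cap \Ker\pi = \{0\}$, the anchor restricts to an isomorphism $\pi_\perp\colon V_+^0 \to T$; I then define $g$ by transporting the (non-degenerate) metric of $V_+^0$ to $T$, and $\lambda\colon T \to E$ by $\lambda(X) = \pi_\perp^{-1}(X) - g(X)$, where $g(X) = g(X,\cdot)$ is viewed in $E$ via $T^* \hookrightarrow E$. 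That $\lambda$ is a splitting is immediate, $\pi\lambda(X) = X$ since $T^* \subset \Ker\pi$; that it is isotropic is the short computation $\langle\lambda X,\lambda Y\rangle = g(X,Y) - \tfrac12 g(X,Y) - \tfrac12 g(Y,X) + 0 = 0$, using that $T^* = V^*$ is isotropic and that $T^*\hookrightarrow E$ is $\tfrac12\pi^*$ composed with the metric isomorphism, so $\langle v,\xi\rangle = \tfrac12\xi(\pi v)$ for $\xi\in T^*$. Finally, $V_+^0 = \{\lambda(X) + g(X) : X\in T\}$ by construction and $V_+\cap\Ker\pi = \{\pi_{Q|V_+}^{-1}(r) : r\in\ad P\}$, so \eqref{eq:V+exact} follows.

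Conversely, given $(g,\lambda)$ I would define $V_+$ by \eqref{eq:V+exact}; concretely, using $\Psi_\lambda$ of \eqref{eq:Psilambda} one has $V_+ = \Psi_\lambda(\{X + r + g(X) : X\in T,\ r\in\mathfrak g\})$, a subbundle of rank $\dim M + \rk\mathfrak g = \rk E - \dim M$. Transporting the ambient metric through $\Psi_\lambda$ gives on this subspace the form $(X,r)\mapsto g(X)(X) + c(r,r) = g(X,X) + c(r,r)$, non-degenerate by the hypotheses on $g$ and $c$; and $V_+\cap T^* = \{0\}$ since under $\Psi_\lambda$ the subspace $T^*$ is the $V^*$-summand, so $\Psi_\lambda(X+r+g(X))\in T^*$ only when $X=0$ and $r=0$. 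Thus $V_+$ is admissible. It remains to see the two assignments are inverse. Running the first construction on such a $V_+$ one finds $V_+\cap\Ker\pi = \Psi_\lambda(\{0+r+0\})$, its orthocomplement in $V_+$ is $V_+^0 = \Psi_\lambda(\{X+0+g(X)\})$ (again using $c$ non-degenerate), hence $\pi_\perp^{-1}(X) = \Psi_\lambda(X+0+g(X)) = \lambda(X)+g(X)$, so the recovered splitting is $\pi_\perp^{-1}(X)-g(X) = \lambda(X)$ and the recovered metric is $g$; the opposite composition is immediate from \eqref{eq:V+exact}.

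The only point requiring care — beyond keeping the factor $\tfrac12$ in the pairing straight throughout — is the interplay between the two inverse projections $\pi_{Q|V_+}^{-1}$ appearing in \eqref{eq:V+exact} and $\pi_{Q|\lambda^\perp}^{-1}$ appearing in \eqref{eq:Psilambda}: one must observe that for a class $r\in\ad P\subset Q$ the element $\Psi_\lambda(0+r+0) = \pi_{Q|\lambda^\perp}^{-1}(r)$ in fact lies in $V_+$ and maps to $r$ under $\pi_{Q|V_+}$, so the two agree on $\ad P$. Equivalently this says that in the splitting $\Psi_\lambda$ the $\ad P$-slot of $V_+$ carries no shear into the $T$- or $T^*$-directions, which is exactly how $V_+$ was defined; it is the one place where a fibrewise computation with the block form of $\mathfrak{so}(W)$ recorded above replaces a mere dimension count. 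Everything else is linear algebra and signature bookkeeping.
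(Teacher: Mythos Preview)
Your proposal is correct and follows exactly the approach of the paper: the paper's proof is the paragraph immediately preceding the proposition, which you have faithfully expanded, and the converse direction and mutual-inverse checks you supply are precisely the routine verifications the paper leaves implicit. Your explicit attention to the agreement of $\pi_{Q|V_+}^{-1}$ and $\pi_{Q|\lambda^\perp}^{-1}$ on $\ad P$, needed to make \eqref{eq:V+exact} a genuine definition in the converse direction, is a point the paper glosses over.
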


Note that using the isomorphism $\Psi_\lambda$ \eqref{eq:Psilambda} the admissible metric has a very simple description
\begin{equation}\label{eq:V+exact}
V_+ = \{X + g(X) + r \colon X \in T, r \in \ad P\}.
\end{equation}
Definition \ref{def:admet} and Proposition \ref{prop:admissmetric} extend previous constructions by R. Rubio for abelian group $G$ \cite{Rubio2}.

Our next task is to describe an admissible metric in more geometric terms: we will see that any such metric $V_+$ determines a connection $A$ on the principal bundle $P$ and a $3$-form $H$ on $M$, related by \eqref{eq:dH}, which enable to describe the bracket in the splitting provided by $V_+$.

\begin{remark}
Geometrically, an admissible metric contains the information of an invariant metric on the total space of $P$ that induces the metric $c$ on the vertical bundle, that is, a metric on the base $M$ and a connection on $P$.
\end{remark}

\begin{proposition}\label{prop:admissmetric2}
An admissible metric $V_+$ on $E$ determines a metric $g$ and a $3$-form $H$ on $M$ and a connection $A$ on $P$, related by \eqref{eq:dH}, such that the bracket in the splitting provided by $V_+$ is given by \eqref{eq:bracket}. Conversely, any pair $(g,A)$ determines an admissible metric, uniquely up to transformation by $b \in \Omega^2$, given by
\begin{equation}\label{eq:V+exact}
V_+ = \{X + g(X) + r \colon X \in T, r \in \ad P\}
\end{equation}
in a splitting \eqref{eq:splittingA} provided by the connection $A$.
\end{proposition}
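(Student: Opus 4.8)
The plan is to combine the linear-algebraic reconstruction of Proposition \ref{prop:admissmetric} with the reduction picture of Lemma \ref{lemma:preferred} and Proposition \ref{prop:reducedcourant}. Given an admissible metric $V_+$, Proposition \ref{prop:admissmetric} produces a metric $g$ on $T$ and an isotropic splitting $\lambda\colon T \to E$ of \eqref{eq:exactcourant} with $V_+$ as in \eqref{eq:V+exact}. The first task is to recognise $\lambda$, up to a basic $B$-field transform, as the splitting $\lambda_A$ attached to a connection $A$ on $P$ via Proposition \ref{prop:reducedcourant}. I would fix a reference connection $A_0$, with associated splitting $\lambda_{A_0}$, and write $\lambda = e^{(b_0,a_0)}\lambda_{A_0}$ for a unique $(b_0,a_0) \in \Omega^2(M)\oplus\Omega^1(M,\ad P)$; setting $A \defeq A_0 + a_0$, the explicit formula for the correcting $2$-form $b$ in the proof of Lemma \ref{lemma:preferred}, together with the group law $(b,a)\cdot(b',a') = (b+b'+c(a\wedge a'),a+a')$, shows that $\lambda_A$ differs from $\lambda_{A_0}$ by a $(b(a_0),a_0)$-transform for a definite $b(a_0) \in \Omega^2(M)$, hence $\lambda = e^{b}\lambda_A$ with $b = b_0 - b(a_0) \in \Omega^2(M)$. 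Both $A$ and $b$ are then determined by $\lambda$: any difference of the underlying connections lies in $\Omega^1(M,\ad P)$ and produces a splitting change whose $\ad P$-component is non-zero, so it cannot be a basic $B$-field transform.

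With $A$ so obtained, Proposition \ref{prop:reducedcourant} expresses the bracket in the splitting \eqref{eq:splittingA} as \eqref{eq:bracket} for a $3$-form $H_A$ on $M$ satisfying $dH_A = c(F\wedge F)$. Passing to the splitting provided by $V_+$ is exactly the basic $B$-field transform $e^{b}$ with $b \in \Omega^2(M)$; a short computation with the Cartan calculus shows that this leaves every $\ad P$-term of \eqref{eq:bracket} unchanged and only replaces $H_A$ by $H \defeq H_A + db$. Since $db$ is closed, we still have $dH = c(F\wedge F)$, which is \eqref{eq:dH}, and the bracket in the $V_+$-splitting has the form \eqref{eq:bracket} for the triple $(g,H,A)$. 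This proves the first assertion.

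For the converse, given a pair $(g,A)$ I would take the isomorphism \eqref{eq:splittingA} determined by $A$ and define $V_+$ by \eqref{eq:V+exact}. Admissibility is immediate from the description $V_+ = \{X + g(X) + r\}$: one has $V_+ \cap T^* = \{0\}$ and $\rk V_+ = \dim M + \rk\ad P = \rk E - \dim M$, while the induced bilinear form $\langle X + g(X) + r,\, X + g(X) + r\rangle = g(X,X) + c(r,r)$ is non-degenerate because $g$ and $c$ are. As the isomorphism \eqref{eq:splittingA} is canonical only up to basic $B$-field transform (Lemma \ref{lemma:preferred}), $V_+$ is well defined up to transformation by $b \in \Omega^2(M)$; and since here $\lambda = \lambda_A$ on the nose, unwinding Proposition \ref{prop:admissmetric} and the first part of the argument recovers precisely the pair $(g,A)$, with $H = H_A$ up to an exact term.

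The step I expect to be the main obstacle is the identification carried out in the first paragraph: matching the affine space of isotropic splittings of $E$, modelled on $\Omega^2(M)\oplus\Omega^1(M,\ad P)$, with the affine space of connections on $P$, modelled on $\Omega^1(M,\ad P)$. Concretely, one must extract from the proof of Lemma \ref{lemma:preferred} that the $\Omega^1(M,\ad P)$-component of a change of splitting is literally a change of connection, the residual freedom being a basic $B$-field transform. Once this bookkeeping is in place, the rest is routine verification.
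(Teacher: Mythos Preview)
Your proposal is correct and follows essentially the same route as the paper: fix a reference connection, write the isotropic splitting underlying $V_+$ as a $(b,a)$-transform of the reference splitting, read off the new connection $A$ from the $\Omega^1(M,\ad P)$-component, and then identify the residual freedom as a basic $B$-field. The paper executes the key step slightly differently: rather than appealing abstractly to Lemma~\ref{lemma:preferred} and the group law on $\Omega^2 \times \Omega^1(\ad P)$, it lifts to $P$ via the explicit maps $L_{A'}$, $L_A$ and proves the single identity $L_{A'}e^{(b,a)} = e^{p^*b + c(a\wedge A)}L_A$ (with $A = A' - a$; note the sign), from which the bracket formula and the explicit expression $H = H' + db + d(c(a\wedge A)) - CS(A') + CS(A)$ drop out at once. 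Your two-step version (change connection, then apply a basic $B$-field to shift $H_A \mapsto H_A + db$) is equivalent and arguably cleaner, at the cost of not producing that closed formula for $H$.
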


\begin{proof}
Let $A'$ be an auxiliary connection on $P$ and consider an isomorphism
$$
E \cong T \oplus \ad P \oplus T^*
$$
induced by $A'$, as described in Proposition \ref{prop:reducedcourant}, with $3$-form $H' \in \Omega^3$  and bracket \eqref{eq:bracket}. As the group $\Omega^2 \times \Omega^1(\ad P)$ acts transitively on the space of isotropic splittings, by Proposition \ref{prop:admissmetric} there exists a $(b,a)$-transform such that
$$
V_+ = e^{(b,a)}\{X + g(X) + r \colon X \in T, r \in \ad P\}.
$$
The first part of the statement reduces to calculate the Dorfman bracket in the splitting provided by $V_+$, given by
\begin{equation}\label{eq:bracketV+}
e^{-(b,a)}[e^{(b,a)}s_1,e^{(b,a)}s_2] \quad \textrm{for} \quad s_1,s_2 \in \Gamma(T \oplus \ad P \oplus T^*).
\end{equation}
To calculate this, we consider the isomorphism
$$
L_{A'} \colon T \oplus \ad P \oplus T^* \to K^\perp/G \colon X + r + \xi \mapsto A'^\perp X + r + c(r,A'\cdot) + p^*\xi,
$$
similarly as in the proof of Proposition \ref{prop:reducedcourant}. Then, defining $A = A' -a$ we notice that
$$
L_{A'}e^{(b,a)} = e^{p^*b + c(a \wedge A)}L_{A},
$$
where $e^{p^*b + c(a \wedge A)}$ denotes the $B$-transform by $p^*b + c(a \wedge A) \in \Omega^2(P)^G$. Therefore, arguing as in Proposition \ref{prop:reducedcourant}, we conclude that the bracket \eqref{eq:bracketV+} is given by \eqref{eq:bracket} with connection $A$ and $3$-form
$$
H = H' + db + d(c(a\wedge A)) - CS(A') + CS(A) \in \Omega^3.
$$
The converse is straightforward from Proposition \ref{prop:admissmetric}.
\end{proof}

\subsection{Generalized connections and torsion}\label{subsec:connectionstorsion}

A generalized connection $D$ \cite{G3} on $E$ is a first order differential operator
$$
D \colon \Gamma(E) \to \Gamma(E^* \otimes E)
$$
satisfying the Leibniz rule $D_e(fs) = fD_es + \pi(e)(f)s$, for $s,e \in \Gamma(E)$ and $f \in C^\infty(M)$. We will only consider generalized connections compatible with the inner product on $E$, that is, satisfying
$$
\pi(e)(\langle s_1,s_2 \rangle) = \langle D_e s_1,s_2 \rangle + \langle s_1,D_e s_2 \rangle.
$$

Given a (standard) connection $\nabla^E$ on $E$ compatible with $\langle\cdot,\cdot\rangle$ we can construct a generalized connection by
$$
D_e s = \nabla^E_{\pi(e)}s
$$
and we note that any other generalized connection differs from $D$ by an element
$$
\chi \in \Gamma(E^* \otimes \mathfrak{o}(E)).
$$
In particular, given a connection $A$ on $P$ and a connection $\nabla^T$ on $T$, we can consider the connection on $E$ induced by
\begin{equation}\label{eq:nablaE}
\nabla^E = \nabla^T \oplus \nabla^A \oplus \nabla^{T^*}
\end{equation}
in a splitting $E \cong T \oplus \ad P \oplus T^*$ provided by $A$ (see Proposition \ref{prop:reducedcourant} and \eqref{eq:splittingA}).

Let us fix a connection $A$. We now calculate an explicit formula for the generalized torsion of an arbitrary generalized connection
$$
D = \nabla^E + \chi,
$$
for $\nabla^E$ as in \eqref{eq:nablaE}. Recall that the generalized torsion, defined by Gualtieri in \cite{G3}, is an element $T_D \in \Lambda^3 E^*$ defined by
$$
T_D(e_1,e_2,e_3) = \langle D_{e_1}e_2 - D_{e_2}e_1 - [[e_1,e_2]],e_3 \rangle + \langle D_{e_3} e_1,e_2 \rangle,
$$
where
$$
[[e_1,e_2]] = \frac{1}{2}([e_1,e_2] - [e_2,e_1])
$$
is the skew symmetrization of the Dorfman bracket on $E$. Let us denote by $T_{\nabla}$ the torsion of the connection $\nabla^T$ and consider the natural projection (see Proposition \ref{prop:reducedcourant})
$$
\pi_Q \colon E\to TP/G.
$$
Denoting by $\lambda\colon T \to T \oplus \ad P \oplus T^*$ the canonical isotropic splitting and
\begin{align*}
\Sigma T_\nabla (e_1,e_2,e_3) & = \langle \pi^*\lambda(T_\nabla)(e_1,e_2),e_3\rangle -  \langle \pi^*\lambda(T_\nabla)(e_1,e_3),e_2\rangle\\
& + \langle \pi^*\lambda(T_\nabla)(e_2,e_3),e_1\rangle,\\
\Sigma\chi (e_1,e_2,e_3) & = \langle \chi_{e_1}e_2,e_3\rangle -  \langle \chi_{e_2}e_1,e_3\rangle + \langle \chi_{e_3}e_1,e_2\rangle,
\end{align*}
a straightforward calculation using \eqref{eq:bracket} lead us to the following formula.
\begin{lemma}
\begin{equation}\label{eq:torsion}
T_D = - \frac{1}{2}\pi^*H + \pi_Q^*(CS(A)) + \Sigma T_\nabla + \Sigma\chi.
\end{equation}
\end{lemma}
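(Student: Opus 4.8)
The proof is a direct substitution of the Dorfman bracket \eqref{eq:bracket} into Gualtieri's definition of $T_D$, once two reductions are made. First, although the expression $D_{e_1}e_2 - D_{e_2}e_1 - [[e_1,e_2]]$ is not $C^\infty(M)$-linear in $(e_1,e_2)$, the full combination $T_D(e_1,e_2,e_3)$ is tensorial in all three arguments: the inhomogeneous Leibniz terms produced by $e_i \mapsto f e_i$ cancel against the derivation term of the Dorfman bracket and the metric-compatibility of $D$, as in \cite{G3}. Hence it suffices to evaluate $T_D$ on sections of pure type for the splitting $E \cong T \oplus \ad P \oplus T^*$ determined by $A$, that is, on triples chosen among $\{\lambda X, r, \xi\}$ with $X \in \Gamma(T)$, $r \in \Gamma(\ad P)$ and $\xi \in \Omega^1$. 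Second, $T_D$ depends on $D$ only through the scalars $\langle D_{e_i}e_j, e_k\rangle$; writing $D = \nabla^E + \chi$ and using that $\chi_e$ is skew-symmetric, one reads off directly from the definition that $T_D = T_{\nabla^E} + \Sigma\chi$, with no cross terms. It therefore remains to establish $T_{\nabla^E} = - \frac{1}{2}\pi^*H + \pi_Q^*(CS(A)) + \Sigma T_\nabla$ for the generalized connection $D_e s = \nabla^E_{\pi(e)}s$ built from \eqref{eq:nablaE}.

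For this I would expand $T_{\nabla^E}(e_1,e_2,e_3)$ using that $\nabla^E = \nabla^T \oplus \nabla^A \oplus \nabla^{T^*}$ is metric-compatible and block-diagonal -- so that $\langle\nabla^E_{\pi(e_3)}e_1,e_2\rangle = \pi(e_3)\langle e_1,e_2\rangle - \langle e_1,\nabla^E_{\pi(e_3)}e_2\rangle$ and each summand preserves its own block -- and then substitute \eqref{eq:bracket}. The outcome organizes itself by the type of the three inputs. The base Lie bracket underlying \eqref{eq:bracket}, together with $\nabla^T_{\pi(e_1)}$, $\nabla^T_{\pi(e_2)}$ and the antisymmetrization inherent in $T_D$, produces $\Sigma T_\nabla$ through the classical identity $\nabla^T_XY - \nabla^T_YX - [X,Y] = T_\nabla(X,Y)$. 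The terms $L_{pX}\eta - i_{pY}d\xi$ of \eqref{eq:bracket} cancel against the $\nabla^{T^*}$-block by exactly the Cartan-calculus computation showing the untwisted flat Dorfman bracket is torsion-free. The twist $i_{pY}i_{pX}H$, paired with $e_3$ through the normalized embedding $\frac{1}{2}\pi^* \colon T^* \to E$, contributes $- \frac{1}{2}\pi^*H$ after antisymmetrizing over the three slots. Finally the curvature-type terms $2c(\nabla^A(AX),AY) + 2c(F(X,\cdot),AY) - 2c(F(Y,\cdot),AX)$ -- together with the $\ad P$-component of the Atiyah bracket, the $\nabla^A$-block and the cyclic term $\langle\nabla^E_{\pi(e_3)}e_1,e_2\rangle$ -- reassemble, after antisymmetrization, into the expression \eqref{eq:CSA} for $CS(A)$ read off on $\pi_Q(e_1),\pi_Q(e_2),\pi_Q(e_3) \in \Gamma(TP/G)$, hence into $\pi_Q^*(CS(A))$. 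Summing the four contributions yields the claimed formula.

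The main obstacle is the bookkeeping in the last two steps. One must verify that the extra term $\langle D_{e_3}e_1,e_2\rangle$ in the definition of $T_D$ is precisely what lifts the quantities $i_{pY}i_{pX}H$ and $2c(F(X,\cdot),AY)$ appearing in \eqref{eq:bracket} -- which are skew only in their first two entries -- to genuine elements of $\Lambda^3 E^*$, and that it does so with exactly the coefficient $- \frac{1}{2}$ for $\pi^*H$ and the cyclic pattern of \eqref{eq:CSA} for $\pi_Q^*(CS(A))$. Keeping track of the factor $\frac{1}{2}$ coming from the isotropic embedding of $T^*$ into $E$, of the relative sign of $2c(\nabla^A(AX),AY)$ against $2c(F(Y,\cdot),AX)$, and of the horizontality of $F$ (so that $F(X,\cdot)$ depends on $\pi_Q(X)$ only through its tangential component) is the delicate point; everything else is a direct substitution of \eqref{eq:bracket} into the definition of $T_D$, which is why the statement is recorded as a straightforward calculation.
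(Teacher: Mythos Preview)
Your proposal is correct and follows the same route as the paper: the paper records this lemma as the result of ``a straightforward calculation using \eqref{eq:bracket}'' and gives no further details, while you spell out exactly how that calculation is organized. Your two reductions---the clean splitting $T_D = T_{\nabla^E} + \Sigma\chi$ (which follows immediately since $\chi$ enters linearly and skew-symmetrically) and the evaluation of $T_{\nabla^E}$ on pure-type sections via the bracket \eqref{eq:bracket}---are the natural way to do this, and your attribution of each block of \eqref{eq:bracket} to the corresponding summand of \eqref{eq:torsion} is accurate.
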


Let $V_+$ be an admissible generalized metric on $E$, with associated endomorphism $G \colon E \to E$. We use now formula \eqref{eq:torsion} to define a canonical torsion-free generalized connection $D^0$ compatible with $G$, that is, satisfying
\begin{equation}\label{eq:LeviCivitacondition}
D G = 0, \qquad T_{D} = 0.
\end{equation}
Equivalently, a generalized connection is compatible with $G$ if and only if it preserves the factors in the orthogonal decomposition $E = V_+ \oplus V_-$.

In the sequel we use the splitting of $E$ given by $V_+$ (see Proposition \ref{prop:admissmetric})
$$
E \cong T \oplus \ad P \oplus T^*
$$
and consider the corresponding metric $g$ and connection $A$ (see Proposition \ref{prop:admissmetric2}). In this splitting $G$ has the following simple expression
\begin{equation*}
G = \left(
\begin{array}{ccc}
0 & 0 & g^{-1}\\
0 & \Id & 0\\
g & 0 & 0
\end{array}\right).
\end{equation*}
Define the generalized connection
$$
D' = \nabla^g \oplus \nabla^A \oplus \nabla^{g^*}
$$
on $E$, induced by the Levi-Civita connection $\nabla^g$ of $g$. It is easy to see that $D'$ preserves $G$ and any other compatible generalized connection differs from $D'$ by
$$
\chi \in E^* \otimes \(\mathfrak{o}(V_+)\oplus \mathfrak{o}(V_-)\).
$$
By \eqref{eq:torsion} we have that the torsion of $D' + \chi$ equals
\begin{equation}\label{eq:torsionvariation}
T_{D' + \chi} = - \frac{1}{2}\pi^*H + \pi_Q^*(CS(A)) + \Sigma \chi.
\end{equation}
We want to find $\chi$ as before such that \eqref{eq:torsionvariation} vanishes. For this, we define $\chi^0 \in E^* \otimes \mathfrak{o}(E)$ by
\begin{equation}\label{eq:Gtransverse}
\chi^0_e = \left(
\begin{array}{ccc}
0 & 0 & 0\\
- i_X F & c^{-1}(c(r,[\cdot,\cdot])) & 0\\
i_X H - 2c(F,r) & 2c(i_X F,\cdot) & 0
\end{array}\right) \in \mathfrak{o}(E),
\end{equation}
for $e = X + r + \xi$. By construction, we have that
$$
\langle \chi^0_{e_1}e_2,e_3\rangle = \(\frac{1}{2}\pi^*H - \pi_Q^*(CS(A))\)(e_1,e_2,e_3)
$$
We introduce the notation
$$
\chi^{\pm\pm\pm}_{e_1}e_2 = \Pi_{\pm} \chi^0_{e_1^{\pm}}(e_2^{\pm}),
$$
where
$$
\Pi_{\pm} = \frac{1}{2}(\Id \pm G) \colon E \to V_{\pm}
$$
denote the orthogonal projections and $e_j^\pm = \Pi_\pm e_j$.
\begin{proposition}
The following expression defines a torsion free generalized connection compatible with $V_+$
\begin{equation}\label{eq:Levicivita}
D^0 = D' + \frac{1}{3} \chi^{+++} + \frac{1}{3} \chi^{---} + \chi^{+-+} + \chi^{-+-}
\end{equation}
\end{proposition}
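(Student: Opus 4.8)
The plan is to verify the two conditions in \eqref{eq:LeviCivitacondition} in turn: first that $D^0$ is a metric generalized connection compatible with $G$, and then that its torsion vanishes. Both reduce, via \eqref{eq:torsionvariation}, to linear-algebra statements about $\chi^0$ and the projections $\Pi_\pm$.

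\emph{Compatibility with $G$.} Write $D^0 = D' + \chi$ with $\chi = \frac{1}{3}\chi^{+++} + \frac{1}{3}\chi^{---} + \chi^{+-+} + \chi^{-+-}$. Each summand, regarded as an $\End E$-valued $1$-form on $E$, is of the form $e_1 \mapsto \Pi_\epsilon \circ \chi^0_{e_1^{\epsilon'}} \circ \Pi_\epsilon$ for signs $\epsilon,\epsilon' \in \{+,-\}$; since $\chi^0_e \in \mathfrak{o}(E)$ and $\Pi_\pm$ are the orthogonal projections onto $V_\pm$, we get $\langle \Pi_\epsilon\chi^0_{e_1^{\epsilon'}}\Pi_\epsilon v, w\rangle = \langle \chi^0_{e_1^{\epsilon'}}\Pi_\epsilon v, \Pi_\epsilon w\rangle = -\langle \Pi_\epsilon v, \Pi_\epsilon\chi^0_{e_1^{\epsilon'}}\Pi_\epsilon w\rangle$, so each summand lies in $\Gamma(E^* \otimes (\mathfrak{o}(V_+)\oplus\mathfrak{o}(V_-)))$, and hence so does $\chi$. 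As $D'$ already preserves $G$, it follows that $D^0$ is a metric generalized connection with $D^0 G = 0$, by the characterization of $G$-compatibility recalled before the statement.

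\emph{Vanishing torsion.} By \eqref{eq:torsionvariation} we have $T_{D^0} = -\frac{1}{2}\pi^*H + \pi_Q^*(CS(A)) + \Sigma\chi$, so it suffices to prove $\Sigma\chi = \mu$, where $\mu := \frac{1}{2}\pi^*H - \pi_Q^*(CS(A)) \in \Lambda^3 E^*$ satisfies $\mu(e_1,e_2,e_3) = \langle\chi^0_{e_1}e_2,e_3\rangle$ by construction of $\chi^0$. Since $\Sigma\chi$ and $\mu$ both lie in $\Lambda^3 E^*$, it is enough to evaluate on homogeneous triples $(e_1,e_2,e_3)$ with each $e_i$ in $V_+$ or $V_-$; up to permutation the cases are $(+,+,+)$, $(+,+,-)$, $(+,-,-)$, $(-,-,-)$. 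Using that $\Pi_\pm e_j$ vanishes when $e_j$ lies in the opposite eigenspace, that $\Pi_\pm$ is self-adjoint, and that $\langle V_+, V_-\rangle = 0$, one reads off inside $\Sigma\chi(e_1,e_2,e_3) = \langle\chi_{e_1}e_2,e_3\rangle - \langle\chi_{e_2}e_1,e_3\rangle + \langle\chi_{e_3}e_1,e_2\rangle$ that in the pure cases $(+,+,+)$ and $(-,-,-)$ all three terms survive, each equal to $\frac{1}{3}$ times a permutation of $\mu(e_1,e_2,e_3)$, and they sum to $\mu$; while in the mixed cases exactly one term survives — the one whose direction $e_i$ is the odd one out, carried by $\chi^{+-+}$ or $\chi^{-+-}$ with coefficient $1$ — and cyclicity of $\mu$ again gives $\mu(e_1,e_2,e_3)$. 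Hence $\Sigma\chi = \mu$ and $T_{D^0} = -\mu + \mu = 0$.

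The genuinely delicate step is this last case check. The coefficients $\frac{1}{3}$ on the pure blocks $\chi^{+++},\chi^{---}$ and $1$ on the mixed blocks $\chi^{+-+},\chi^{-+-}$ are precisely what makes the three terms of $\Sigma\chi$ reassemble into $\mu$ simultaneously on pure and on mixed triples, so the bookkeeping of which projections annihilate which arguments, and of the transposition-versus-cyclic signs in $\Sigma\chi$, must be carried out carefully. The inputs it relies on — that $\chi^0$ is $\mathfrak{o}(E)$-valued, that $\langle\chi^0_{e_1}e_2,e_3\rangle$ is totally antisymmetric and equals $\mu$, and formula \eqref{eq:torsionvariation} — are all available from the material preceding the statement.
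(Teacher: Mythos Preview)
Your proof is correct and follows exactly the approach the paper indicates: the paper simply states that ``the proof follows from formula \eqref{eq:torsionvariation} and a straightforward calculation and is left to the reader,'' and you have carried out precisely that calculation. Your compatibility argument (each block $\Pi_\epsilon\chi^0_{(\cdot)^{\epsilon'}}\Pi_\epsilon$ lands in $\mathfrak{o}(V_\epsilon)$) and your case-by-case verification that $\Sigma\chi$ reproduces all eight sign-components of $\mu = \tfrac{1}{2}\pi^*H - \pi_Q^*(CS(A))$ on homogeneous triples are both sound, and the bookkeeping of which term survives in the mixed cases is accurate.
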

The proof follows from formula \eqref{eq:torsionvariation} and a straightforward calculation and is left to the reader.

A remarkable fact about the connection $D^0$ is that it singles out four (standard) connections on $T$ with skew torsion, compatible with the metric $g$, given by
\begin{align*}
\nabla^\pm &= \nabla^g \pm \frac{1}{2}g^{-1}H,\\
\nabla^{\pm1/3} &= \nabla^g \pm \frac{1}{6}g^{-1}H.
\end{align*}
For our purpouses, we will focus in the pair given by $\nabla^{+}$ and $\nabla^{1/3}$. This pair of connections was used by Bismut \cite{Bismut} to prove a Lichnerowicz type formula for the square of the Dirac operator of $\nabla^{1/3}$, to which we will come back later (for physical applications see \cite{KiYi}).

To see the role of $\nabla^{+}$ and $\nabla^{1/3}$, we need to give explicit formulae for our connection $D^0$. Note that in the splitting $E \cong T \oplus \ad P \oplus T^*$ provided by $V_+$ we can write (see Proposition \ref{prop:admissmetric})
\begin{align*}
e_1^+ & = X + gX + r,\\
e_2^- & = Y - gY,\\
e_3^+ & = Z + gZ + t,
\end{align*}
for $e_j^+ \in V_+$, $j = 1,3$, and $e_2^- \in V_-$. Then, we have
\begin{equation}\label{eq:D-+}
\begin{split}
D^0_{e_2^-}e_3^+ 
& = 2\Pi_+\(\nabla^+_YZ + g^{-1}c(F(Y,\cdot),t)\) + \nabla_Yt - F(Y,Z)\\
D^0_{e_1^+}e_3^+ 
& = 2\Pi_+\(\nabla^{1/3}_XZ + \frac{1}{3} g^{-1}c(F(X,\cdot),t) - \frac{1}{3} g^{-1}c(F(Z,\cdot),r)\)\\
& + \nabla_Xt - \frac{1}{3}F(X,Z) + \frac{1}{3}c^{-1}\(c(r,[t,\cdot])\)
\end{split}
\end{equation}

\begin{remark}
The pair $\nabla^{-}$ and $\nabla^{1/3}$ was used by Bismut \cite{Bismut} to prove a local index  theorem for connections with skew torsion, and more recently by Ferreira \cite{Ferreira} to prove a vanishing theorem for twisted De Rham cohomology.
\end{remark}


As pointed out in \cite{CSW}, the conditions \eqref{eq:LeviCivitacondition} do not determine $D$ uniquely. Indeed, any other torsion free generalized connection compatible with $V_+$ is of the form $D^0 + \chi^+ \oplus \chi^-$ for
\begin{equation}\label{eq:torfreecongen}
\chi^\pm \in V_\pm^* \otimes \mathfrak{o}(V_\pm) \quad  \textrm{such that}\quad \Sigma \chi^{\pm} = 0.
\end{equation}
We use now this fact to construct a family of such generalized connections parameterised by $\Omega^1(M)$, that we will use in our applications in \secref{sec:sugra}. Given a $1$-form $\varphi \in \Omega^1(M)$, regard $\varphi \in E^*$ 
and consider the \emph{Weyl term}
$$
\chi^\varphi \in E^* \otimes\mathfrak{o}(E)
$$
defined by
\begin{equation}\label{eq:Weylterm}
\chi^\varphi_e e' = \varphi(e') e - \langle e,e' \rangle \langle\cdot,\cdot\rangle^{-1}\varphi.
\end{equation}
Changing now $\chi^0$ by $\chi^0 + \chi^\varphi$ in the definition \eqref{eq:Levicivita}, we obtain a new torsion-free, compatible, generalized connection $D^\varphi$.

To give explicit formulae for $D^\varphi$, we use again the splitting $E \cong T \oplus \ad P \oplus T^*$ provided by $V_+$. Then, using the same notation as in formulae \eqref{eq:D-+},
\begin{equation}\label{eq:Dvarphi}
\begin{split}
D^\varphi_{e_2^-}e_3^+ &= D^0_{e_2^-}e_3^+,\\
D^\varphi_{e_1^+}e_3^+ & = D^0_{e_1^+}e_3^+ + \frac{1}{3}\Pi_+\(\varphi(Z)e_1^+ - 2(g(X,Z) + c(r,t))\varphi\).
\end{split}
\end{equation}
More generally, we can take an arbitrary element $\varphi \in E^*$ to define $D^\varphi$.

\begin{remark}\label{rem:LeviCivita}
Let $\Aut E$ denote the group of automorphisms of $E$, that is, automorphisms of the vector bundle $E$ covering a diffeomorphism on $M$ and preserving the bracket and the product (see \cite[Definition 2.2]{BuCaGu}). Note that the map $V_+ \to D_0$ defined by assigning the canonical generalized connection $D^0$ to an admissible metric $V_+$ is natural, in the sense that it is equivariant with respect to the natural action of $\Aut E$ in the domain and the target. We believe that this condition is strong enough to determine such a map uniquely. If so, $D^0$ should be considered as \emph{the} analogue of the Levi-Civita in generalized geometry.
\end{remark}

\begin{remark}
The generalized connection $D^\varphi$ 
provides an analogue of the Levi--Civita connection on a Weyl manifold. Recall that a pair $(g,\varphi)$ on $M$, given by a metric $g$ and $\varphi \in \Omega^1(M)$, determine a Weyl structure on $M$ \cite{Folland}. A Weyl structure carries a unique Levi--Civita connection $\nabla^\varphi$, that is, a torsion free and compatible connection, in the sense that
$$
\nabla^\varphi g + \varphi \otimes g = 0.
$$
Using the Levi-Civita connection $\nabla^g$ of $g$, we can write an explicit expression for $\nabla^\varphi$ given by
\begin{equation}\label{eq:Weylconnection}
\nabla = \nabla^g + \frac{1}{2} \varphi \otimes \Id + \chi^\varphi,
\end{equation}
where $\chi^\varphi \in \Omega^1(\mathfrak{o}(TM))$ is defined by
$$
\chi^\varphi_X Y = \frac{1}{2}(\varphi(Y)X - g(X,Y)g^{-1}\varphi).
$$
To see the analogy, note that the second summand in \eqref{eq:Weylconnection} does not lie in $\Omega^1(\mathfrak{o}(TM))$ and hence does not have a natural analogue in generalized geometry (it cannot be expressed in terms of the principal bundle of orthonormal frames of $E$). Therefore, we see that our expression for $D^\varphi$ is formally as \eqref{eq:Weylconnection} once we drop $\varphi \otimes \Id/2$.
\end{remark}

\section{Generalized curvature}\label{sec:Ricci}

In this section we calculate the generalized curvature and the generalized Ricci tensor of an admissible metric on the reduced Courant algebroid $E$, with respect to an arbitrary torsion free, compatible generalized connection. In particular, we give explicit formulae for the family of generalized connections $D^\varphi$ constructed in Section \ref{subsec:connectionstorsion}.

\subsection{Generalized curvature}

The generalized curvature of a generalized connection $D$ \cite{G3} is defined by
$$
GR(e_1,e_2) = D_{e_1}D_{e_2} - D_{e_2}D_{e_1} - D_{[[e_1,e_2]]} \in \mathfrak{o}(E)
$$
for $e_1,e_2 \in C^\infty(E)$. This quantity becomes tensorial evaluated on a pair of orthogonal sections. In particular, given a generalized metric
$$
E = V_+ \oplus V_-
$$
we obtain a tensor by restriction
$$
GR \in V_+^* \otimes V_-^* \otimes \mathfrak{o}(E).
$$

Let us fix $\varphi \in \Omega^1(M)$ and an admissible metric $V_+ \subset E$, with corresponding metric $g$ and connection $A$. Consider the torsion free, compatible, generalized connection $D^\varphi$ constructed in Section \ref{subsec:connectionstorsion}. 
We want to calculate explicitely its curvature
$$
GR(e_1^+,e_2^-)e_3^+ \in V_+,
$$
for $e_j^+ \in V_+$, $j = 1,3$, and $e_2^- \in V_-$. To find a closed formula for $GR$, we define a tensorial quantity depending on the pair of connections $\nabla^+$ and $\nabla^{1/3}$, that is,
\begin{equation}\label{eq:R3}
\begin{split}
R^{1/3}(X,Y)Z & = \nabla^{1/3}_X\nabla^+_YZ - \nabla^+_Y\nabla^{1/3}_XZ - \(\frac{\nabla^{1/3}+\nabla^+}{2}\)_{[X,Y]}Z\\
& + i_{\nabla^{1/3}_XY + \nabla^+_YX - 2/3g^{-1}H(X,Y,\cdot)}\(\frac{\nabla^{1/3}-\nabla^+}{2}\)Z.
\end{split}
\end{equation}
The tensor $R^{1/3}$ is an hybrid of the curvatures of $\nabla^+$ and $\nabla^{1/3}$ (cf. \cite[Th. 1.1]{Ferreira2}), that can be written more explicitely as
\begin{align*}
R^{1/3}(X,Y)Z & = R^g(X,Y)Z + g^{-1}\Bigg{(}\frac{1}{2}(\nabla^g_XH)(Y,Z,\cdot) - \frac{1}{6}(\nabla^g_YH)(X,Z,\cdot)\\
& + \frac{1}{12}H(X,g^{-1}H(Y,Z,\cdot),\cdot) - \frac{1}{12}H(Y,g^{-1}H(X,Z,\cdot),\cdot)\\
& - \frac{1}{6}H(Z,g^{-1}H(X,Y,\cdot),\cdot)\Bigg{)}.
\end{align*}

For the calculations below, we set $\nabla = \nabla^A$ and denote $D^\varphi$ by $D$, to simplify the notation. Using the splitting $E \cong T \oplus \ad P \oplus T^*$ provided by $V_+$ (see Proposition \ref{prop:admissmetric}) and the notation in \eqref{eq:D-+} we obtain
\begin{align*}
D_{e_1^+}D_{e_2^-}e_3^+ & = 2\Pi_+\Bigg{(}\nabla_X^{1/3}\nabla^+_YZ + \nabla_X^{1/3}(g^{-1}c(F(Y,\cdot),t)) - \frac{1}{3}g^{-1}c(F(\nabla^+_YZ,\cdot),r)\\
& + \frac{1}{3}g^{-1}c(F(X,\cdot),\nabla_Yt)\\
& - \frac{1}{3}g^{-1}c(F(g^{-1}c(F(Y,\cdot),t),\cdot),r) - \frac{1}{3} g^{-1}c(F(X,\cdot),F(Y,Z))\\
& - \frac{1}{3}g(X,\nabla^+_YZ + g^{-1}c(F(Y,\cdot),t))\varphi\\
& - \frac{1}{3}c(r,\nabla_Yt - F(Y,Z))\varphi\Bigg{)}\\
& + \frac{1}{3}\varphi(\nabla^+_YZ + g^{-1}c(F(Y,\cdot),t))e_1^+\\
& +  \nabla_X\nabla_Yt -  \nabla_X(F(Y,Z)) + \frac{1}{3}c^{-1}\(c(r,[\nabla_Yt,\cdot])\) - \frac{1}{3}F(X,\nabla^g_YZ)\\
& - \frac{1}{3}c^{-1}\(c(r,[F(Y,Z),\cdot])\)\\
& - \frac{1}{6}F(X,g^{-1}H(Y,Z,\cdot)) - \frac{1}{3}F(X, g^{-1}c(F(Y,\cdot),t)),
\end{align*}
\begin{align*}
D_{e_2^-}D_{e_1^+}e_3^+ & = 2\Pi_+\Bigg{(}\nabla_Y^+\nabla^{1/3}_XZ + \frac{1}{3}\nabla_Y^+(g^{-1}c(F(X,\cdot),t)) - \frac{1}{3}\nabla_Y^+(g^{-1}c(F(Z,\cdot),r))\\
& + g^{-1}c(F(Y,\cdot),\nabla_Xt)\\
& - \frac{1}{3}g^{-1}c(F(Y,\cdot),F(X,Z)) + \frac{1}{3} g^{-1}c(r,[t,F(Y,\cdot)])\\
& + \frac{1}{3}\varphi(Z)\(\nabla^+_YX + g^{-1}c(F(Y,\cdot),r)\)\\
& - \frac{1}{3}(g(X,Z) + c(r,t))\nabla^+_Y\varphi\Bigg{)}\\
& + \frac{1}{3}i_Yd(\varphi(Z))e_1^+ - \frac{1}{3}i_Yd(g(X,Z + c(r,t)))2\Pi_+\varphi\\
& +  \nabla_Y\nabla_Xt - \frac{1}{3} \nabla_Y(F(X,Z)) + \frac{1}{3} \nabla_Y(c^{-1}\(c(r,[t,\cdot])) \) - F(Y,\nabla^g_XZ)\\
& - \frac{1}{6}F(Y,g^{-1}H(X,Z,\cdot))  + \frac{1}{3}F(Y,g^{-1}c(F(Z,\cdot),r)) - \frac{1}{3}F(Y,g^{-1}c(F(X,\cdot),t))\\
& + \frac{1}{3}\varphi(Z)(\nabla_Yr - F(Y,X)) + \frac{1}{3}(g(X,Z) + c(r,t))F(Y,g^{-1}\varphi).
\end{align*}
Using the equality
\begin{align*}
[[e_1^+,e_2^-]] & = [X,Y] - g(\nabla^g_XY + \nabla^g_YX,\cdot) + H(X,Y,\cdot)\\
& - F(X,Y) - \nabla_Y r - 2c(F(Y,\cdot),r)
\end{align*}
we also obtain
\begin{align*}
D_{[[e_1^+,e_2^-]]}e_3^+ & =  2\Pi_+\(\nabla^g_{[X,Y]}Z\) + \nabla_{[X,Y]}t + \frac{1}{3}\chi^{+++}_{[[e_1^+,e_2^-]]}e^+_3 + \chi^{+-+}_{[[e_1^+,e_2^-]]}e^+_3\\
& = 2\Pi_+\Bigg{(}\nabla^g_{[X,Y]}Z + \frac{1}{6}g^{-1}H(2[X,Y] + \nabla^g_XY + \nabla^g_YX,Z,\cdot)\\
&  - \frac{1}{6}g^{-1}H(g^{-1}H(X,Y,\cdot),Z,\cdot)\\
& + \frac{1}{3}g^{-1}c(F(Z,\cdot),\nabla_Yr)) + \frac{1}{3}g^{-1}c(F(2[X,Y] + \nabla^g_XY + \nabla^g_YX,\cdot),t)\\
& + \frac{1}{3}g^{-1}H(g^{-1}c(F(Y,\cdot),r),Z,\cdot) + \frac{1}{3}g^{-1}c(F(Z,\cdot),F(X,Y))\\
& - \frac{1}{3}g^{-1}c(F(g^{-1}H(X,Y,\cdot),\cdot),t) + \frac{2}{3}g^{-1}c(F(g^{-1}c(F(Y,\cdot),r),\cdot),t)\\
& + \frac{1}{6}\varphi(Z)\([X,Y] - \nabla_Y^gX - \nabla^g_XY + g^{-1}H(X,Y,\cdot) - 2g^{-1}c(F(Y,\cdot),r)\)\\
& - \frac{1}{6}(g(Z,[X,Y] - \nabla^g_YX - \nabla^g_XY) + H(X,Y,Z))\varphi\\
& + \frac{1}{3}\(c(F(Y,Z),r) + c(t,F(X,Y) + \nabla_Yr)\)\varphi\Bigg{)}\\
& + \nabla_{[X,Y]}t - \frac{1}{3}c^{-1}\(c(F(X,Y) + \nabla_Yr,[t,\cdot])\) - \frac{1}{3}\varphi(Z)(F(X,Y) - \nabla_Yr)\\
& + \frac{1}{3}F(Z,2[X,Y] + \nabla^g_XY + \nabla^g_YX - g^{-1}H(X,Y,\cdot) + 2g^{-1}c(F(Y,\cdot),r)).
\end{align*}
Finally, the identities
\begin{align*}
R_\nabla(X,Y)t & = [F(X,Y),t]\\
\nabla_Y(c^{-1}(c(r,[t,\cdot]))) & = c^{-1}(c(r,[\nabla_Y t,\cdot])) - c^{-1}(c(\nabla_Yr,[t,\cdot]))\\
(\nabla_X^{g}(c(F,t)))(Y,Z) & =  i_Z(\nabla_X^g(c(F(Y,\cdot),t))) - c(F(\nabla^g_XY,Z),t)\\
(\nabla_X^{g}H)(Y,Z,\cdot) & =  \nabla_X^g(H(Y,Z,\cdot)) - H(\nabla^g_XY,Z,\cdot) - H(Y,\nabla^gZ,\cdot)
\end{align*}
lead us to the following expression for the curvature
\begin{lemma}\label{lem:curvature}
\begin{align*}
GR(e_1^+,e_2^-)e_3^+ & = 2\Pi_+\Bigg{(}R^{1/3}(X,Y)Z + g^{-1}\Bigg{(}i_Y\(\nabla^{1/3}_X(c(F,t)) - c(F,\nabla_Xt)\)\\
& + \frac{1}{3}i_Z\(\nabla^{+}_Y(c(F,r)) - c(F,\nabla_Yr)\) - \frac{1}{3}i_X\(\nabla^{+}_Y(c(F,t)) - c(F,\nabla_Yt)\)\\
& + \frac{1}{3}H(Z,g^{-1}c(F(Y,\cdot),r),\cdot) + \frac{2}{3}c(F(g^{-1}H(X,Y,\cdot),\cdot),t)\\
& - \frac{1}{3}c(F(g^{-1}c(F(Y,\cdot),t),\cdot),r) - \frac{2}{3}c(F(g^{-1}c(F(Y,\cdot),r),\cdot),t)\\
& - \frac{1}{3}c(F(X,\cdot),F(Y,Z)) - \frac{1}{3}c(F(Z,\cdot),F(X,Y))\\
& + \frac{1}{3}c(F(Y,\cdot),F(X,Z)) - \frac{1}{3}c(r,[t,F(Y,\cdot)])\\
& + \frac{1}{3}(g(X,Z) + c(r,t))\nabla^{+}_Y\varphi\Bigg{)}\Bigg{)}\\
& - \frac{1}{3}\(i_Z\(\nabla^+_Y\varphi\) - c(F(Y,g^{-1}\varphi),t)\)e_1^+\\
& + \frac{4}{3}[F(X,Y),t] + \frac{1}{3}[F(Y,Z),r]\\
& + \frac{1}{3}\Big{(}\nabla_Y(F(X,Z)) - F(\nabla_Y^gX,Z) - F(X,\nabla^g_YZ)\Big{)}\\
& - \Big{(}\nabla_X(F(Y,Z)) - F(\nabla_X^gY,Z) - F(Y,\nabla^g_XZ)\Big{)}\\
& - \frac{1}{6}F(X,g^{-1}H(Y,Z),\cdot) + \frac{1}{6}F(Y,g^{-1}H(X,Z),\cdot)\\
& + \frac{1}{3}F(Z,g^{-1}H(X,Y),\cdot)\\
& - \frac{1}{3}F(X,g^{-1}c(F(Y,\cdot),t)) - \frac{1}{3}F(Y,g^{-1}c(F(Z,\cdot),r))\\
& + \frac{1}{3}F(Y,g^{-1}c(F(X,\cdot),t)) - \frac{2}{3}F(Z,g^{-1}c(F(Y,\cdot),r))\\
& - \frac{1}{3}(g(X,Z) + c(r,t))F(Y,g^{-1}\varphi)
\end{align*}
\end{lemma}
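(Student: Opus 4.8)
The plan is to compute $GR(e_1^+,e_2^-)e_3^+ = D_{e_1^+}D_{e_2^-}e_3^+ - D_{e_2^-}D_{e_1^+}e_3^+ - D_{[[e_1^+,e_2^-]]}e_3^+$ directly, starting from the three explicit expansions already displayed above the statement. Since those three quantities have been written out in full in the splitting $E \cong T \oplus \ad P \oplus T^*$ provided by $V_+$, the proof is in principle a matter of collecting terms, but the bookkeeping is delicate and must be organized by the target factor: the component landing in $\Pi_+$ (built from $T$ and $\ad P$ directions paired into $V_+$), the component along $e_1^+$, and the component in $\ad P$. I would treat each of these three blocks separately.

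First I would assemble the $\ad P$-valued part. Here the relevant contributions are the terms $\nabla_X\nabla_Yt$, $\nabla_Y\nabla_Xt$, $\nabla_{[X,Y]}t$ (which combine into the curvature $R_\nabla(X,Y)t = [F(X,Y),t]$ via the first displayed identity), together with the various $F(\cdot,\cdot)$ terms, the $c^{-1}(c(r,[t,\cdot]))$ terms (simplified using the second displayed identity $\nabla_Y(c^{-1}(c(r,[t,\cdot]))) = c^{-1}(c(r,[\nabla_Yt,\cdot])) - c^{-1}(c(\nabla_Yr,[t,\cdot]))$), and the Weyl contributions proportional to $\varphi(Z)$ and to $(g(X,Z)+c(r,t))$. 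Matching against \eqref{eq:bracket} and the formula for $[[e_1^+,e_2^-]]$, the $\nabla F$ terms should regroup into the covariant-exterior-derivative combinations $\nabla_Y(F(X,Z)) - F(\nabla^g_YX,Z) - F(X,\nabla^g_YZ)$ and similarly with $X,Y$ swapped, using that $\nabla$ here means $\nabla^A$ while the base derivatives are $\nabla^g$; the purely algebraic $F$-composition and bracket terms then fall out by inspection.

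Next I would assemble the $\Pi_+$-part. The leading pieces $\nabla^{1/3}_X\nabla^+_YZ$, $\nabla^+_Y\nabla^{1/3}_XZ$, $\nabla^g_{[X,Y]}Z$, together with the connection terms coming from $[[e_1^+,e_2^-]]$ (whose $T$-component contributes $-g(\nabla^g_XY + \nabla^g_YX,\cdot) + H(X,Y,\cdot)$, feeding into the $\frac{1}{3}\chi^{+++}$ and $\chi^{+-+}$ pieces of $D$ as already displayed), should by construction reorganize into exactly $R^{1/3}(X,Y)Z$ — this is the point of defining $R^{1/3}$ by \eqref{eq:R3}, and I would verify it by comparing \eqref{eq:R3} against the sum of the three second-derivative terms restricted to the pure $Z$-direction. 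The remaining $\Pi_+$ contributions, all of the form $g^{-1}(\cdots)$, split into: (i) the $\nabla^{1/3}(c(F,t))$, $\nabla^+(c(F,r))$, $\nabla^+(c(F,t))$ terms, to be put in covariant form using the third and fourth displayed identities (which relate $\nabla^g$ applied to the tensors $c(F,\cdot)$ and $H$ to the "algebraic" derivatives), yielding the $i_Y(\nabla^{1/3}_X(c(F,t)) - c(F,\nabla_Xt))$ type expressions; (ii) the quadratic-in-$F$ and $H$-times-$F$ algebraic terms; and (iii) the Weyl term $\frac{1}{3}(g(X,Z)+c(r,t))\nabla^+_Y\varphi$, which arises because the $\Pi_+\nabla^+_Y\varphi$ from $D_{e_2^-}D_{e_1^+}e_3^+$ survives (the corresponding $i_Yd(g(X,Z)+c(r,t))$ terms cancel against differentiating the scalar coefficient). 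Finally the $e_1^+$-component collects the terms $\frac{1}{3}\varphi(\nabla^+_YZ + g^{-1}c(F(Y,\cdot),t))e_1^+$, $\frac{1}{3}i_Yd(\varphi(Z))e_1^+$, and $-\frac{1}{3}\varphi(Z)(F(X,Y) - \nabla_Yr)$-type pieces, which combine (using $i_Yd(\varphi(Z)) = i_Y(\nabla^+_Y\varphi)(Z) + \varphi(\nabla^+_YZ)$ up to torsion corrections absorbed by $\nabla^+$) into $-\frac{1}{3}(i_Z(\nabla^+_Y\varphi) - c(F(Y,g^{-1}\varphi),t))e_1^+$.

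The main obstacle is the sheer combinatorial weight of the cancellations: many terms involving $\nabla^g_XY + \nabla^g_YX$, the symmetrized Christoffel data, and the $H$-torsion corrections appear with different coefficients ($\frac16$, $\frac13$, $\frac23$) across the three expansions, and they must cancel or recombine precisely to produce the covariant curvature expressions $R^{1/3}$ and $\nabla F$. I expect the safest route is to verify each of the displayed auxiliary identities first, substitute them, and then check that the non-tensorial (i.e. Christoffel-symbol-dependent) pieces cancel block by block — this cancellation is forced by the fact that $GR$ is tensorial on orthogonal sections, which provides a useful consistency check at each stage. Once the non-tensorial terms are gone, what remains is precisely the claimed formula.
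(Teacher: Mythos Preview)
Your plan is correct and is essentially identical to the paper's own approach: the paper also obtains the lemma by subtracting the three displayed expansions of $D_{e_1^+}D_{e_2^-}e_3^+$, $D_{e_2^-}D_{e_1^+}e_3^+$ and $D_{[[e_1^+,e_2^-]]}e_3^+$, and then invoking the four auxiliary identities (for $R_\nabla(X,Y)t$, for $\nabla_Y(c^{-1}(c(r,[t,\cdot])))$, and for $\nabla^g$ applied to $c(F,t)$ and $H$) to regroup the result into the covariant quantities $R^{1/3}$, $\nabla F$, and the $\nabla^{1/3}$/$\nabla^+$ derivatives of $c(F,\cdot)$. Your organization by target factor ($\Pi_+$-part, $e_1^+$-part, $\ad P$-part) and your use of tensoriality as a running consistency check are exactly the right way to manage the bookkeeping.
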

A careful inspection shows that this expression is tensorial in $e_1^+,e_2^-$. If we use instead the torsion free compatible connection $D + \chi^+ \oplus \chi^-$, with $\chi^\pm$ as in \eqref{eq:torfreecongen}, the curvature receives an additional contribution of the form
$$
\chi^+_{e_1^+}(D_{e_2^-}e_3^+) - D_{e_2^-}(\chi^+_{e_1^+}e_3^+) - \chi^+_{[[e_1^+,e_2^-]]}e_3^+.
$$

\subsection{Generalized Ricci tensor}

The generalized Ricci tensor
$$
\operatorname{GRic} \in V_-^* \otimes V_+^*
$$
of the generalized metric $G$ with torsion free connection $D$ acting on $e_2^-, e_3^+$ is defined as the trace of the endomorphism
$$
e_1^+ \to R(e_1^+,e_2^-)e_3^+.
$$
To calculate $\operatorname{GRic}$, define the tensors
\begin{align*}
H \circ H (Y,Z) & := \sum_{j=1}^n g(H(e_j,Y,\cdot),H(e_j,Z,\cdot)),\\
F \circ F (Y,Z) & := \sum_{j=1}^m c(r_j,F(Y,g^{-1}c(F(Z,\cdot),r_j))),
\end{align*}
where $\{e_j\}^n_{j=1}$ and $\{r_j\}^m_{j=1}$ are, respectively, orthonormal basis for $g$ and $c$. Using $H \circ H$, we can now write the Ricci tensor of the connection $\nabla^+$ as
$$
\operatorname{Ric}^+ = \operatorname{Ric}^g - \frac{1}{4} H \circ H - \frac{1}{2}d^*H,
$$
where $\operatorname{Ric}^g$ denotes the Ricci tensor of $g$. Recall that the adjoint of the exterior differential of a $k$-form $\beta$ can be calculated as
$$
d^*\beta = - \sum_{j=1}^n i_{e_j}(\nabla^{g*}_{e_j}\beta)
$$
and 
and also that
$$
i_Y *(F \wedge *H) = -(-1)^{ind(g)+n-1}\sum_{j=1}^n F(e_j,g^{-1}H(e_j,Y,\cdot)),
$$
where $ind(g)$ denotes the index of the metric $g$. Then, as a straightforward consequence of Lemma \ref{lem:curvature}, we obtain the desired expression for the generalized Ricci tensor.
\begin{proposition}\label{prop:Ricci}
\begin{align*}
\operatorname{GRic}(e_2^-,e_3^+) 
& = \(\operatorname{Ric}^+ - F\circ F - \frac{\rk V_+-1}{3}\nabla^+\varphi\)(Y,Z)\\
& + i_Y c\(d_A^*F + \frac{(-1)^{ind(g)+n-1}}{2}*(F\wedge *H) - \frac{\rk V_+-1}{3}F(g^{-1}\varphi,\cdot),t\).
\end{align*}
\end{proposition}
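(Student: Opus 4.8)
The plan is to compute $\operatorname{GRic}(e_2^-,e_3^+) = \operatorname{tr}\left(e_1^+ \mapsto \operatorname{GR}(e_1^+,e_2^-)e_3^+\right)$ directly from the closed formula for $\operatorname{GR}$ in Lemma \ref{lem:curvature}, by tracing over $e_1^+ = X + gX + r$ ranging over an orthonormal frame of $V_+$. Concretely, writing $e_1^+$ in the splitting $E \cong T \oplus \ad P \oplus T^*$ provided by $V_+$, one sums the $X$-contributions over a $g$-orthonormal frame $\{e_j\}$ of $T$ and the $r$-contributions over a $c$-orthonormal frame $\{r_j\}$ of $\ad P$; only the terms of $\operatorname{GR}(e_1^+,e_2^-)e_3^+$ whose $V_+$-component is \emph{linear} in $X$ or in $r$ contribute to the trace, and for the component along $e_1^+$ itself one simply reads off the scalar coefficient.

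First I would isolate, term by term in Lemma \ref{lem:curvature}, the pieces producing a nontrivial trace. The leading piece $2\Pi_+ R^{1/3}(X,Y)Z$ contributes $\operatorname{tr}_X R^{1/3}(\cdot,Y)Z$; using the explicit $R^{1/3}$-expansion (with $\nabla^g$ and $H$) and the identities $\operatorname{Ric}^+ = \operatorname{Ric}^g - \frac14 H\circ H - \frac12 d^*H$ together with $d^*H = -\sum_j i_{e_j}(\nabla^{g*}_{e_j}H)$, this should collapse to $\operatorname{Ric}^+(Y,Z)$. The $\nabla^+\varphi$ term yields two contributions: one from the explicit $\frac13(g(X,Z)+c(r,t))\nabla^+_Y\varphi$ inside the $2\Pi_+(\dots)$ bracket, which upon tracing gives $(\dim T + \dim\ad P - \text{something})\cdot$ corrections, and one from the $-\frac13\left(i_Z(\nabla^+_Y\varphi) - c(F(Y,g^{-1}\varphi),t)\right)e_1^+$ line, whose trace over $e_1^+$ pulls out a factor $\operatorname{rk} V_+$; collecting these (and tracking that $\operatorname{tr}\Pi_+$ on the relevant subspace equals $\operatorname{rk} V_+$) should produce $-\frac{\operatorname{rk}V_+ - 1}{3}(\nabla^+\varphi)(Y,Z)$ and the matching $-\frac{\operatorname{rk}V_+ - 1}{3}i_Y c(F(g^{-1}\varphi,\cdot),t)$ in the $\ad P$-component. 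The $F$-quadratic terms — the various $c(F,F)$ expressions inside $2\Pi_+(\dots)$ and the $F(\cdot, g^{-1}c(F,\cdot))$ expressions in the $\ad P$-line — must be traced carefully; the symmetric combinations should assemble into $-(F\circ F)(Y,Z)$ in the metric direction, while the antisymmetric/mixed ones cancel. The remaining $\nabla F$-type terms ($\nabla_Y(F(X,Z))$, $d_A^*F$-precursors) and the $F\wedge *H$-type terms ($F(X,g^{-1}H(Y,Z),\cdot))$ etc.) trace to $i_Y c(d_A^* F,t)$ and $\frac{(-1)^{ind(g)+n-1}}{2} i_Y c(*(F\wedge *H),t)$ respectively, using the stated formula for $i_Y *(F\wedge *H)$.

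The main obstacle I expect is bookkeeping: Lemma \ref{lem:curvature} has many terms spread across the $T$-, $\ad P$- and $T^*$-components of $V_+$, and one must be scrupulous about (i) which terms are genuinely linear in the trace variable $X$ or $r$ versus which vanish under $\operatorname{tr}$ because they are constant in $e_1^+$ or involve $e_1^+$ nonlinearly, (ii) the correct normalization of $\Pi_+$ and of the identification $V_+ \cong T \oplus \ad P$ so that $\operatorname{tr}_{V_+} = \operatorname{tr}_T + \operatorname{tr}_{\ad P}$ and the $e_1^+$-directed term contributes $\operatorname{rk}V_+$, and (iii) sign conventions in $d^*$, $d_A^*$ and the Hodge-star identity for $*(F\wedge *H)$, which carry the $ind(g)$ and $n$ dependence. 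Once these are pinned down, the identities listed just before the Proposition ($R_\nabla(X,Y)t = [F(X,Y),t]$, the Leibniz rules for $c(F,t)$ and $H$, and the $d^*$, $i_Y*(F\wedge *H)$ formulas) reduce everything to the stated closed form, so the argument is a careful but essentially mechanical contraction of Lemma \ref{lem:curvature}; I would present it as such, displaying only the grouping of terms rather than every intermediate line.
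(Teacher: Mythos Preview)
Your approach is exactly the one the paper takes: the paper states only that the formula is a ``straightforward consequence of Lemma~\ref{lem:curvature}'' after recording the auxiliary identities for $\operatorname{Ric}^+$, $d^*\beta$, and $i_Y *(F\wedge *H)$, and your proposal is precisely this contraction over a $V_+$-frame, with the bookkeeping made more explicit. There is nothing to add or correct.
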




\section{Application to Supergravity}\label{sec:sugra}

In this section we apply the theory of admissible metrics on a transitive Courant algebroid, developed in the previous sections, to provide a novel description of the field equations of heterotic supergravity. To clarify ideas an exhibit the main differences of our approach with that of \cite{CSW}, we first discuss an oversimplified version of Type II supergravity.

\subsection{Type II supergravity}\label{subsec:typeII}

Here we essentially follow the description of Type II supergravity in \cite{CSW}, focussing on the new aspects provided by our construction. This is a ten dimensional supergravity theory on spin manifold $M$, i.e. oriented, with $w_2(M) = 0$, and a choice of $\gamma \in H^1(X,\ZZ_2)$, which arises in the low-energy limit of Type II string Theory.

The (bosonic) field content of this theory is given by a metric $g$ of signature $(1,9)$, a dilaton $\phi \in C^\infty(M)$ and a $3$-form $H \in \Omega^3(M)$, with action
$$
\int_M e^{-2\phi}\(S^g + 4|d\phi|^2 - \frac{1}{2}|H|^2\)\Vol_g.
$$
The equations of motion of this theory are therefore given by (with the notation in \S\ref{sec:Ricci})
\begin{equation}\label{eq:motionTIIsugra}
\begin{split}
\operatorname{Ric}^g + 2 \nabla^g(d\phi) - \frac{1}{4} H \circ H & = 0,\\
S^g + 4 \Delta \phi - 4 |d\phi|^2 - \frac{1}{2}|H|^2 & = 0,\\
d^*(e^{-2\phi}H)& = 0,
\end{split}
\end{equation}
where $|H|^2 = \frac{1}{6}\sum_{i,j,k}H(e_i,e_j,e_k)^2$ with respect to an orthonormal frame for $g$. Crucially, consistency also requires the condition
$$
dH = 0.
$$
Note that we have set the RR fields to zero and neglected the fermionic equations of motion \cite{CSW} to simplify the discussion.

To describe the previous conditions in terms of generalised geometry, we fix a cohomology class $\Omega \in H^3(M,\RR)$. Consider the exact Courant algebroid
$$
0 \to T^*M \to E \to TM \to 0
$$
on $M$ determined by this class (see e.g. \cite{G1annals}). Recall that an admissible metric $V_+$ on $E$ (see Definition \ref{def:admet}) determines a pseudo-Riemannian metric $g$ on $M$ and a closed $3$-form $H \in \Omega$ and therefore, up to the dilaton field, an admissible metric of signature $(1,9)$ contains precisely the field content of Type II supergravity. We should notice that our defition of admissible metric \ref{def:admet} makes sense for exact Courant algebroids. In this context, an admissible metric is a generalised metric
$$
E = V_+ \oplus V_-
$$
of arbitrary signature that is transverse to $T^*M$.

To introduce the dilaton, we use the freedom in the choice of torsion-free generalized connection compatible with $V_+$ as explained in Section \ref{subsec:connectionstorsion}. Given a dilaton $\phi \in C^\infty(M)$, we consider the torsion free, $V_+$-compatible, generalized connection $D^\phi$ determined by the 1-form
$$
\varphi = -\frac{2}{3} d\phi.
$$
Explicitly, in the splitting $E \cong T \oplus T^*$ provided by $V_+$, we have
\begin{align*}
D_{v^\mp}^\phi w^\pm  & = 2\Pi_\pm (\nabla^\pm_XY)\\
D_{v^\pm}^\phi w^\pm  & = 2\Pi_\pm\(\nabla^{\pm 1/3}_XY + \frac{1}{3}\(\varphi(Y)X - 2g(X,Y)g^{-1}\varphi\)\)\\
& = 2\Pi_\pm\(\nabla^{\pm 1/3}_XY - \frac{2}{9}d\phi(Y)X + \frac{4}{9}g(X,Y)g^{-1}d\phi\),
\end{align*}
where $v^\mp = X \pm gX$ and $w^\pm = Y \pm gY$. This formula shall be compared with formula (4.31) in \cite{CSW}. Using the notation in Proposition \ref{prop:Ricci}, this lead us to the following formula for the generalized Ricci tensor of $D^\phi$, that provides a unified description of the first and third equations in \eqref{eq:motionTIIsugra}
\begin{align*}
\operatorname{GRic} & = \operatorname{Ric}^g + 2\nabla^g(d\phi) - \frac{1}{4} H \circ H - \frac{e^{2\phi}}{2}d^*(e^{-2\phi}H).
\end{align*}

For the second equation, we follow the interpretation in \cite{CSW} in terms of the \emph{generalized scalar curvature}. Consider the standard decomposition of the spinor bundle
$$
S(V_+) = S_+(V_+) \oplus S_-(V_+)
$$
The generalised scalar curvature $\operatorname{GS}$ of $D^\phi$ arises from a Lichnerowicz type formula that compares the square of the Dirac operator
$$
D\hspace{-0.2cm}\slash_{+}^{\;\phi}  \hspace{0.1cm} \colon S_+(V_+) \to S_-(V_+)
$$
of the induced operator $D^\phi_+\colon V_+ \to V_+^* \otimes V_+$ with the rough Laplacian of $D^\phi_-\colon V_+ \to V_-^* \otimes V_+$ acting on spinors. More concretely
$$
(D\hspace{-0.2cm}\slash_{+}^{\;\phi} )^2 \epsilon_+ = ((D^\phi_{-})^*D^\phi_{-} + \operatorname{GS})\epsilon_+
$$
for any section $\epsilon_+$ of $S_+(V_+)$, where (cf. \cite[p.21]{CSW})
$$
\operatorname{GS} = S^g + 4 \Delta \phi - 4 |d\phi|^2 - \frac{1}{2}|H|^2.
$$
As pointed out to me by N. Hitchin, taking into account the relation of the Generalised connection $D^\phi$ with the standard connections $\nabla^+$ and $\nabla^{1/3}$, this formula should be compared with Bismut's Lichnerowicz type formula for the cubic Dirac operator \cite{Bismut} (see also \cite[Th. 2.1]{Ferreira}).

The next result is a direct consecuence of the previous formulae.
\begin{theorem}\label{th:typeII}
An admissible metric $G$ on $E$ with torsion free, $G$-compatible, generalized connection $D^\phi$ satisfies
$$
\operatorname{GRic} = 0, \qquad \operatorname{GS} = 0,
$$
if and only if the corresponding tuple $(g,\phi,H)$ satisfies the field equations of Type II supergravity \eqref{eq:motionTIIsugra}.
\end{theorem}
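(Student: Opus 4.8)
The plan is to unwind both generalized conditions $\operatorname{GRic}=0$ and $\operatorname{GS}=0$ using the explicit formulae already assembled in Section~\ref{sec:Ricci}, then match them term by term against the system \eqref{eq:motionTIIsugra}. Since Type~II is the special case of the transitive setup where the gauge group is trivial (so $\ad P = 0$, $F = 0$, and $c$ is absent), every occurrence of $F$, $F\circ F$, $d_A^*F$, $*(F\wedge *H)$ and the $c(\cdot,\cdot,t)$-component in Proposition~\ref{prop:Ricci} and Lemma~\ref{lem:curvature} drops out. The first step is therefore to record the degenerate form of Proposition~\ref{prop:Ricci}: with $\varphi = -\tfrac{2}{3}d\phi$ and $\rk V_+ = 10$, the factor $\tfrac{\rk V_+ - 1}{3} = 3$, so that $-\tfrac{\rk V_+-1}{3}\nabla^+\varphi = -3\nabla^+\varphi = 2\nabla^+(d\phi)$, and $\operatorname{GRic}$ collapses to $\operatorname{Ric}^+ + 2\nabla^+(d\phi)$ as a tensor on $T$. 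This matches the displayed formula $\operatorname{GRic} = \operatorname{Ric}^g + 2\nabla^g(d\phi) - \tfrac14 H\circ H - \tfrac{e^{2\phi}}{2}d^*(e^{-2\phi}H)$ stated just above the theorem, once one uses $\operatorname{Ric}^+ = \operatorname{Ric}^g - \tfrac14 H\circ H - \tfrac12 d^*H$ together with the Weitzenböck-type identity $2\nabla^+(d\phi) = 2\nabla^g(d\phi) + (\text{a skew piece involving } H)$ and the Bianchi-type rewriting of $d^*H - 2 i_{d\phi}H$ as $e^{2\phi}d^*(e^{-2\phi}H)$.

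The second step is to decompose $\operatorname{GRic}(e_2^-,e_3^+)$, which a priori is a section of $V_-^*\otimes V_+^*$, into its symmetric and antisymmetric parts under the identifications $V_\pm \cong (T,g)$. The symmetric part yields precisely the first equation of \eqref{eq:motionTIIsugra}, namely $\operatorname{Ric}^g + 2\nabla^g(d\phi) - \tfrac14 H\circ H = 0$; the antisymmetric part yields $d^*(e^{-2\phi}H) = 0$, i.e. the third equation. (In the trivial-gauge case the $i_Yc(\,\cdot\,,t)$ line of Proposition~\ref{prop:Ricci} is empty, so no further components appear; the closedness $dH = 0$ is built into the exact Courant algebroid structure and so needs no separate argument.) The third step handles $\operatorname{GS} = 0$: by the cited identification $\operatorname{GS} = S^g + 4\Delta\phi - 4|d\phi|^2 - \tfrac12|H|^2$, this is literally the second equation of \eqref{eq:motionTIIsugra}, so vanishing of $\operatorname{GS}$ is equivalent to it. Conversely, feeding the three equations of \eqref{eq:motionTIIsugra} back through the same formulae gives $\operatorname{GRic} = 0$ and $\operatorname{GS} = 0$, establishing the equivalence in both directions.

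The main obstacle is bookkeeping rather than conceptual: one must verify carefully that the symmetric/antisymmetric split of $\operatorname{GRic}$ really separates the Einstein equation from the $H$-field equation with no cross-contamination, and that all the normalization constants ($\tfrac14$, $\tfrac12$, the factor $2$ in front of $\nabla^g d\phi$, the sign in $\varphi = -\tfrac23 d\phi$) come out exactly right after substituting $\rk V_+ = 10$. The only genuinely nontrivial input is the identity relating $\nabla^+(d\phi)$ and the codifferential $d^*(e^{-2\phi}H)$ to the quantities appearing in \eqref{eq:motionTIIsugra}; this is a standard manipulation with the Levi-Civita connection and the skew torsion $g^{-1}H$, and is precisely the content of the displayed $\operatorname{GRic}$ formula that precedes the theorem, so it may be quoted. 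With that identity in hand the proof is a direct comparison, and I would present it as such, referring to Proposition~\ref{prop:Ricci}, Lemma~\ref{lem:curvature}, and the Lichnerowicz formula for $\operatorname{GS}$ for the analytic content.
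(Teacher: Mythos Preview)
Your proposal is correct and follows essentially the same approach as the paper: the paper states immediately before the theorem that it is ``a direct consequence of the previous formulae,'' namely the displayed expression $\operatorname{GRic} = \operatorname{Ric}^g + 2\nabla^g(d\phi) - \tfrac{1}{4}H\circ H - \tfrac{e^{2\phi}}{2}d^*(e^{-2\phi}H)$ and the Lichnerowicz-type identity $\operatorname{GS} = S^g + 4\Delta\phi - 4|d\phi|^2 - \tfrac{1}{2}|H|^2$, which are then matched against \eqref{eq:motionTIIsugra} exactly via the symmetric/antisymmetric split you describe. Your write-up simply supplies more of the bookkeeping (the constant $\tfrac{\rk V_+ - 1}{3} = 3$, the rewriting $e^{2\phi}d^*(e^{-2\phi}H) = d^*H + 2i_{\nabla\phi}H$) that the paper leaves implicit.
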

In contraposition to \cite{CSW}, here the quatities $\operatorname{GRic}$ and $\operatorname{GS}$ depend on the choice of torsion free generalised connection. The special role played by the connection $D^0$ in generalised geometry certainly deserves further studies, that we leave for elsewhere (see Remark \ref{rem:LeviCivita}).

In addition to the equations of motion, in order to find a classical solution of the theory supersymmetry requires the existence of a pair of  
spinors $\epsilon_\pm \in S_\pm(V_+)$ satisfying the \emph{Killing spinor equations} \cite{CSW}.
As pointed out in \cite{CSW}, this equations admit an elegant expression in terms of the generalised connections $D^\phi$ (e.g. for $\epsilon_+$)
\begin{equation}\label{eq:susyTIIsugra}
D^\phi_- \epsilon_+ = 0, \qquad D\hspace{-0.2cm}\slash_{+}^{\;\phi}  \hspace{0.1cm} \epsilon_+ = 0,
\end{equation}

\subsection{Heterotic supergravity}

The low-energy limit of heterotic string theory, often called heterotic supergravity, is formulated on a $10$-dimensional spin manifold $M$ endowed with a principal bundle $P_{YM}$ with compact structure group $K$, contained in $SO(32)$ or $E_8 \times E_8$. We denote by $-\tr_\mathfrak{k}$ the killing form on the Lie algebra $\mathfrak{k}$ of $K$.

In addition to the (bosonic) field content of Type II supergravity, in heterotic supergravity there is also a gauge field $A$, that is, a connection on $P_{YM}$. The action is given by
$$
\int_M e^{-2\phi}\(S^g + 4|d\phi|^2 - \frac{1}{2}|H|^2 + \frac{\alpha'}{2}(|R|^2- |F_A|^2)) \)\Vol_g,
$$
where $\alpha'$ is a real (positive) parameter of the theory, the norm squared of the curvature $F_A$ is taken with respect to $-\tr_\mathfrak{k}$ (we use the notation in \cite{MaSp}, with the opposite sign convention for the killing form) and $R$ is the curvature of a spin connection $\nabla^T$ on $M$ for the metric $g$. Here, the norm of $R$ is taken with respect to the corresponding killing form on $\mathfrak{so}(1,9)$, that we denote $-\tr$.

An important (and particularly confusing) feature of this theory is that the $H$-field is locally described in terms of a $B$-field potential and the Chern--Simons $3$-forms of $A$ and $\nabla^T$ as
\begin{equation}\label{eq:Hsugra}
H = dB + \alpha'(CS(\nabla^T) - CS(A)).
\end{equation}
Although this expression can be apparently made rigorous using a complicated definition for the $B$-field \cite{Wi2000}, we take it here as a rather formal expression. It implies a well defined global constraint for the exterior differential of the $H$-field, namely
\begin{equation}\label{eq:dHsugra}
dH = \alpha'(\tr R \wedge R - \tr_\mathfrak{k} F_A \wedge F_A),
\end{equation}
that arises as a requirement to have an anomaly free theory.

Taking the local form \eqref{eq:Hsugra} of the $H$-field into account, the equations of motion of this theory can be written as (with the notation in \S\ref{sec:Ricci} and $c = \tr -\tr_\mathfrak{g}$)
\begin{equation}\label{eq:motionHetsugra}
\begin{split}
\operatorname{Ric}^g + 2 \nabla^g(d\phi) - \frac{1}{4} H \circ H + \alpha' \tr_{\mathfrak{k}}F_A \circ F_A - \alpha' \tr R \circ R & = 0,\\
d^*(e^{-2\phi}H) & = 0,\\
d^*_A(e^{-2\phi}F_A) + \frac{e^{-2\phi}}{2} *(F_A \wedge *H) & = 0,\\
d^*_{\nabla^T}(e^{-2\phi}R) + \frac{e^{-2\phi}}{2}*(R \wedge *H) & = 0,
\end{split}
\end{equation}
The second summand in the last two equations arises from the variation of the term $|H|^2$ and \eqref{eq:Hsugra}. The last equation comes from the variation of the connection $\nabla^T$, that we consider here as an additional field of the theory.

\begin{remark}
In the physics literature (the unfamiliar reader can skip this remark), each field in heterotic supergravity is considered as a formal power series in $\alpha'$. In \cite{Hull2} it is argued that, when considering the full expansion of the fields and taking into account the supersymmetry variations, the connection $\nabla^T$ must equal the connection with skew-torsion $\nabla^-$ (with connection $\nabla^+$ for the supersymmetry variations \cite{MaSp}).
In this work, however, we are going to consider the equations of motion up to one loop as exact. Taking into account the supersymmetry variations up to one loop, to obtain a sensible theory we must consider the connection $\nabla^T$ as an additional field in the theory (otherwise we obtain an overdetermined system in the light of the main result in \cite{Ivan09} (see also \cite{FIVU})).
\end{remark}

\begin{remark}
As is common in the literature (see e.g. \cite{FIVU}), we have neglected the equation coming from the variation of the dilaton
$$
S^g + 4 \Delta \phi - 4 |d\phi|^2 - \frac{1}{2}|H|^2 + \alpha'(|R|^2 - |F_A|^2) = 0.
$$
This is justified because this equation is a esentially a consequence of the others. More precisely, applying the covariant derivative $\nabla^g$ in the first equation of \eqref{eq:motionHetsugra} and contracting with the metric, combined with the second equation, we obtain that the left hand side is constant (to the knowledge of the author, this was originally observed in \cite{CFMP}. See also \cite[\S 6.9]{QFS}). This constant can be proved to be zero taking the supersymmetry variations into account. A proper understanding of the supersymetry variations and the dilaton equation in the framework of generalised geometry requires the study of spinors in transitive algebroids, that we hope to address in future work.
\end{remark}

We now apply our theory to provide a natural description of the field equations \eqref{eq:motionHetsugra} and \eqref{eq:dHsugra} in terms of generalized geometry. Let $P_M$ be the 
$SL(10,\mathbb{R})$-bundle of oriented frames of the tangent bundle. Let $P$ be the principal bundle given by the cross product of $P_YM$ and $P_M$. On the Lie algebra of the structure group $\mathfrak{g} = \mathfrak{k} \oplus \mathfrak{sl}(10,\RR)$, we fix the non degenerate pairing
$$
c = \alpha'(\tr_\mathfrak{k} - \tr),
$$
given by re-scaling the difference of the killing forms on $\mathfrak{sl}(10,\RR)$ and $\mathfrak{k}$. Assume that the first Pontrjagin class of $P$ with respect to $c$ vanishes $p_1(P) = 0$ or, equivalently,
$$
p_1(P_{YM}) = p_1(P_M).
$$
Note that this is the necessary and sufficient condition to have solutions of the anomaly equation \eqref{eq:dHsugra}.  By Proposition \eqref{lemma:canonical}, this condition determines a canonical exact Courant algebroid
$$
0 \to T^*P \to \hat E \to TP \to 0
$$
endowed with a lifted $G$-action with non-degenerate pairing $c$ (see \eqref{eq:quadratic}) and such that it admits an equivariant isotropic splitting. Consider the reduced transitive Courant algebroid
$$
0 \to T^* \to E \to T \to 0
$$
given by Proposition \ref{prop:reducedcourant}.

On the reduced algebroid $E$, we consider admissible metrics $V_+$ such that $g$ has signature $(1,9)$ and the connection on $P$ is a product of a connection $A$ on $P_{het}$ and a spin connection $\nabla^T$ on $P_M$ compatible with $g$. With this assumption, we have
$$
dH = \alpha'(\tr R\wedge R - \tr F_A\wedge F_A)
$$
and therefore, up to the dilaton field, an admissible metric $V_+$ contains precisely the field content of heterotic supergravity. To introduce the dilaton, we use the freedom in the choice of torsion-free generalized connection compatible with $V_+$ as explained in Section explained in Section \ref{subsec:connectionstorsion}. Given $\phi \in C^\infty(M)$, we consider the generalized connection $D^\phi$ determined by the 1-form
$$
\varphi = \frac{-6}{\rk V_+ - 1} d\phi.
$$
Using the notation in Proposition \ref{prop:Ricci}, this lead us to the following formula for the generalized Ricci tensor
\begin{align*}
\operatorname{GRic}(e_2^-,e_3^+) & = \(\operatorname{Ric}^g + 2\nabla^g(d\phi) - \frac{1}{4} H \circ H - F\circ F - \frac{e^{2\phi}}{2}d^*(e^{-2\phi}H)\)(Y,Z)\\
& + i_Y c\(e^{2\phi}d_A^*(e^{-2\phi}F) + \frac{1}{2}*(F\wedge *H),t\),
\end{align*}
where $F$ denotes the curvature of the product connection given by $\nabla^T$ and $A$. The next result is a direct consecuence of this formula.

\begin{theorem}\label{th:het}
An admissible metric $V_+$ on $E$ with  free, $V_+$-compatible, generalized connection $D^\phi$ satisfies
$$
\operatorname{GRic} = 0
$$
if and only if the corresponding tuple $(g,\phi,H,A,\nabla^T)$ satisfies the field equations of heterotic supergravity \eqref{eq:dHsugra} and \eqref{eq:motionHetsugra}.
\end{theorem}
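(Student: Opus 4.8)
The plan is to read everything off the explicit expression for $\operatorname{GRic}$ displayed immediately before the statement, which is Proposition \ref{prop:Ricci} specialized to the $1$-form $\varphi = \frac{-6}{\rk V_+ - 1}d\phi$, to the product connection with curvature $F = F_A \oplus R$, and to the pairing $c$. The first point is that the anomaly (Bianchi) identity \eqref{eq:dHsugra} is \emph{not} an additional condition but is encoded in the Courant algebroid itself: by Proposition \ref{prop:admissmetric2} an admissible metric $V_+$ of the prescribed type is equivalent to the data of a metric $g$ of signature $(1,9)$, a connection $A$ on $P_{YM}$, a spin connection $\nabla^T$ on $P_M$ compatible with $g$, and a $3$-form $H$ on $M$; and the structure equation \eqref{eq:dH} for the Courant algebroid $E$ produced by Proposition \ref{lemma:canonical} reads $dH = c(F\wedge F)$, which, upon decomposing $c$ along $\mathfrak{g} = \mathfrak{k}\oplus\mathfrak{sl}(10,\RR)$, is precisely \eqref{eq:dHsugra}. (Existence of such $E$ uses $p_1(P) = 0$, which is exactly the integrability condition for \eqref{eq:dHsugra}.) Thus the tuple $(g,\phi,H,A,\nabla^T)$ attached to $(V_+, D^\phi)$ automatically satisfies \eqref{eq:dHsugra}, and it remains to show that $\operatorname{GRic} = 0$ is equivalent to the system \eqref{eq:motionHetsugra}.

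Next I would derive the displayed two-line formula for $\operatorname{GRic}$ from Proposition \ref{prop:Ricci}. Substituting $\varphi = \frac{-6}{\rk V_+-1}d\phi$ converts the Weyl contributions into $-\frac{\rk V_+-1}{3}\nabla^+\varphi = 2\nabla^+(d\phi)$ and $-\frac{\rk V_+-1}{3}F(g^{-1}\varphi,\cdot) = 2F(g^{-1}d\phi,\cdot)$; since $ind(g) = 1$ and $n = 10$ the sign $(-1)^{ind(g)+n-1}$ equals $+1$; and one then rewrites the outcome using $\nabla^+ = \nabla^g + \frac12 g^{-1}H$, the identity $\operatorname{Ric}^+ = \operatorname{Ric}^g - \frac14 H\circ H - \frac12 d^*H$, and the dilaton-twisting identities $d^*(e^{-2\phi}H) = e^{-2\phi}(d^*H + 2\,i_{g^{-1}d\phi}H)$ and $d_A^*(e^{-2\phi}F) = e^{-2\phi}(d_A^*F + 2\,i_{g^{-1}d\phi}F)$. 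Collecting terms produces exactly the displayed formula for $\operatorname{GRic}(e_2^-,e_3^+)$.

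Finally I would match the four equations of \eqref{eq:motionHetsugra} with the vanishing of the components of $\operatorname{GRic}$. The first component is a bilinear form in $(Y,Z)$ whose symmetric part is $\operatorname{Ric}^g + 2\nabla^g(d\phi) - \frac14 H\circ H - F\circ F$ and whose antisymmetric part is the $2$-form $-\frac{e^{2\phi}}{2}d^*(e^{-2\phi}H)$; decomposing $F\circ F$ along $\mathfrak{g} = \mathfrak{k}\oplus\mathfrak{sl}(10,\RR)$ identifies the vanishing of the symmetric part with the first (Einstein) equation and the vanishing of the antisymmetric part with $d^*(e^{-2\phi}H) = 0$. The second component $i_Y c\(e^{2\phi}d_A^*(e^{-2\phi}F) + \frac12 *(F\wedge *H),t\)$, as $Y$ ranges over $T$ and $t$ over $\ad P$, vanishes identically if and only if, by non-degeneracy of $c$, $d_A^*(e^{-2\phi}F) + \frac{e^{-2\phi}}{2}*(F\wedge *H) = 0$, which upon splitting $\ad P = \ad P_{YM}\oplus\ad P_M$ is exactly the third and fourth equations. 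Hence $\operatorname{GRic} = 0$ if and only if \eqref{eq:motionHetsugra} holds; combined with the automatic validity of \eqref{eq:dHsugra}, this proves the theorem in both directions.

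The step I expect to be the main obstacle is not conceptual but a matter of sign bookkeeping: one must track all signs through the indefinite pairing $c$ and the decomposition $\mathfrak{g} = \mathfrak{k}\oplus\mathfrak{sl}(10,\RR)$ so that $F\circ F$, $d_A^*F$, and $c(F\wedge F)$ each split into a Yang--Mills piece and a curvature-squared piece with precisely the signs appearing in \eqref{eq:dHsugra}--\eqref{eq:motionHetsugra}, and one must verify that the dilaton twistings reassemble all the $e^{-2\phi}$ weights correctly; this requires the sign conventions for $c$ and for the Killing forms to be fixed compatibly throughout.
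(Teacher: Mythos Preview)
Your proposal is correct and follows precisely the paper's approach: the theorem is stated as a direct consequence of the displayed formula for $\operatorname{GRic}$, which is Proposition \ref{prop:Ricci} specialized to $\varphi = \frac{-6}{\rk V_+ - 1}d\phi$ and the product connection, and you have simply unpacked that consequence in full detail. Your observation that \eqref{eq:dHsugra} is automatic from the Courant algebroid structure via Proposition \ref{prop:admissmetric2} and \eqref{eq:dH}, together with the symmetric/antisymmetric decomposition of the first line and the $\ad P_{YM}\oplus \ad P_M$ splitting of the second, is exactly what is needed.
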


\begin{remark}
We should mention that our discussion also applies to Einstein-Yang-Mills supergravity \cite{Ch,BRWN,ChMa}, 
by considering $P = P_{YM}$. The field equations of this theory are given by \eqref{eq:dHsugra} and \eqref{eq:motionHetsugra} by formally  setting $R$ equals to $0$.
\end{remark}


An interesting fact in the previous result, as well as in the generalized geometric treatment of type II supergravity \cite{CSW}, 
is that the three natural quantities which arise from the generalized Ricci tensor, agree exactely with the leading order term in $\alpha'$-expansion of the $\beta$ functions $\beta^G, \beta^B, \beta^A$ arising in the heterotic sigma model \cite{CFMP} (see also \cite[p.990]{QFS}) (as in \cite{CSW,CSW2}, we expect the leading term of the remaining function $\beta^\phi$ to arise from the generalized scalar curvature). The vanishing of the $\beta$ functions is the condition for conformal invariance of the associated quantum field theory, which stablishes an identification between the renormalization group flow of these physical theories with the \emph{generalized Ricci flow}. 

\end{document}